\documentclass[11pt,a4paper]{amsart}
\usepackage[T1]{fontenc}
\usepackage[english]{babel}
\usepackage{lmodern,microtype}
\usepackage[a4paper,margin=28mm]{geometry}
\usepackage{amsmath,amssymb,amsthm,mathtools}
\usepackage{enumitem,booktabs,array}
\usepackage{tikz}
\usepackage{tikz-cd}
\usepackage{float}
\usetikzlibrary{arrows.meta,positioning,calc,matrix,patterns}
\usepackage{xurl}
\usepackage[hidelinks]{hyperref}
\setlist[enumerate]{label=\textnormal{(\roman*)},leftmargin=*,itemsep=3pt,topsep=5pt}
\setlist[itemize]{leftmargin=*,itemsep=3pt,topsep=5pt}
\theoremstyle{plain}
\newtheorem{theorem}{Theorem}[section]
\newtheorem{lemma}[theorem]{Lemma}
\newtheorem{proposition}[theorem]{Proposition}
\newtheorem{corollary}[theorem]{Corollary}
\theoremstyle{definition}

\newtheorem{example}[theorem]{Example}
\newtheorem{question}[theorem]{Question}
\theoremstyle{remark}
\newtheorem{remark}[theorem]{Remark}
\AddToHook{env/theorem/begin}{\begin{samepage}}
\AddToHook{env/theorem/end}{\end{samepage}}
\AddToHook{env/lemma/begin}{\begin{samepage}}
\AddToHook{env/lemma/end}{\end{samepage}}
\AddToHook{env/proposition/begin}{\begin{samepage}}
\AddToHook{env/proposition/end}{\end{samepage}}
\AddToHook{env/corollary/begin}{\begin{samepage}}
\AddToHook{env/corollary/end}{\end{samepage}}
\newcommand{\Vp}{\mathsf V_p}
\newcommand{\Vone}{\mathsf V_1}
\newcommand{\Sone}{\mathsf S_1}
\newcommand{\Sfin}{\mathsf S_{\mathrm{fin}}}
\newcommand{\Ufin}{\mathsf U_{\mathrm{fin}}}
\newcommand{\Gone}{\mathsf G_1}
\newcommand{\Gfin}{\mathsf G_{\mathrm{fin}}}
\newcommand{\Gammap}{\Gamma_p}
\newcommand{\SC}{\mathsf{C}^{\uparrow}}
\newcommand{\CD}{\mathsf{C}}
\newcommand{\Iwin}{\mathsf I\mathord\uparrow}
\newcommand{\Inwin}{\mathsf I\not\uparrow}

\newcommand{\fin}{\mathrm{fin}}
\newcommand{\pos}{\mathrm{pos}}
\newcommand{\cbdl}{\mathrm{lat}}
\newcommand{\covM}{\operatorname{cov}(\mathcal M)}
\newcommand{\ra}{\mathfrak r_a}
\newcommand{\aT}{\mathfrak a_T}

\newcommand{\Bad}{\mathcal Q}
\newcommand{\concat}{\mathbin{{}^\frown}}
\newcommand{\restr}{\mathbin{\upharpoonright}}
\title[Primeless proofs of the Menger and Rothberger games]{Primeless proofs of the Menger and Rothberger games}
\author[R. M. Mezabarba]{Renan M. Mezabarba}
\address{Departamento de Ciências Exatas e Tecnológicas,
Universidade Estadual de Santa Cruz,
Ilhéus, Bahia, Brazil}

\email{rmmezabarba@uesc.br}
\subjclass[2020]{Primary 54C65, 91A44. Secondary 06D20, 03E17.}

\keywords{Selection principles, topological games, lattices, Heyting algebras, small cardinals.}

\begin{document}
\begin{abstract}
We continue the study of the Menger and Rothberger games on lattices carried out in~\cite{Mezabarba}. This time, we extend the earlier results by dropping some hypotheses that turned out to be unnecessary, and use Stone duality to recover known game characterizations for dense open families. We also give a formulation of $\Ufin$ for partially ordered sets and prove its game characterization without any lattice assumption. Finally, almost disjoint families give complete distributive lattices on which the selection principles and the corresponding games differ. Dias's results give the lower bounds $\covM$ and $\mathfrak d$ for the least sizes of such counterexamples~\cite{Dias}.
\end{abstract}
\maketitle
\section{Introduction}\label{sec:introduction}

Hurewicz's and Pawlikowski's theorems characterize the Menger and Rothberger properties by the absence of winning strategies for Player I in the corresponding games \cite{Hurewicz,Pawlikowski}. In Scheepers' notations introduced in~\cite{ScheepersI}, these are the equivalences between $\Sfin(\mathcal O,\mathcal O)$ and $\Inwin\Gfin(\mathcal O,\mathcal O)$, and between $\Sone(\mathcal O,\mathcal O)$ and $\Inwin\Gone(\mathcal O,\mathcal O)$, where $\mathcal O$ denotes the family of open covers of a topological space. The proofs of Szewczak and Tsaban~\cite{ST} allow one to identify the operations on open covers which are needed in these arguments.

In~\cite{Mezabarba}, the same questions were considered for a lattice $L$. There, enough prime elements were used to replace the arguments in~\cite{ST} involving points.
Although the obtained results generalized the topological case, the restriction concerning prime elements excluded familiar examples, like Boolean algebras and frames.

So, after the publication of that paper, I began studying these questions for Boolean algebras with Fab\'iola Loterio, then my undergraduate research student. Together, we considered arguments using Boolean complements in place of prime elements, but the project remained unfinished. When I returned to these questions in July 2026, discussions with ChatGPT 5.6 Sol helped me recognize how preservation of suprema by binary meets could be used to extend those arguments beyond Boolean algebras. This led me to consider the distributive identities, which are used in the present paper as follows.

We first show that enough prime elements already imply the join-infinite distributive law used in~\cite{Mezabarba}. Next we prove the corresponding lattice versions of Hurewicz's and Pawlikowski's theorems in Sections~\ref{sec:finite} and~\ref{sec:one} under less restrictive distributivity hypotheses.
In Section~\ref{sec:ufin} we return to $\Ufin$ and prove its game characterization, and in Section~\ref{sec:cardinals} we address the limits of the first two game theorems. Finally, in Section~\ref{sec:stone} we use Stone duality to recover known characterizations for dense open families on Stone spaces and state some further questions.

\medskip
\noindent\textit{Acknowledgments.} I would like to thank Fab\'iola Loterio for her work on the Boolean algebra case and for the discussions that motivated the present continuation. I also acknowledge the use of ChatGPT 5.6 Sol for brainstorming, suggestions of cardinal inequalities and counterexamples. Discussions with ChatGPT (GPT-6 Astra) also helped me locate the earlier results of Scheepers and Dias discussed in Remark~\ref{rem:scheepers} and before Theorem~\ref{thm:lower}, which I had overlooked when preparing the first version of this paper. By signing this paper, I take full responsibility for the content and any errors that may remain, as the definition of authorship requires.

\section{Preliminaries and distributive identities}
\label{sec:preliminaries}

Following Scheepers' notations~\cite{ScheepersI}, for a nonempty family $\mathcal B$ of nonempty subsets of a set $D$, the principle $\Sone(\mathcal B,\mathcal B)$ asserts that for every sequence $(A_n)_{n\in\omega}$ in $\mathcal B$ there exists $a_n\in A_n$ for each $n\in\omega$ such that $\{a_n:n\in\omega\}\in\mathcal B$, while the principle $\Sfin(\mathcal B,\mathcal B)$ requires finite sets $F_n\subseteq A_n$ such that $\bigcup_{n\in\omega}F_n\in\mathcal B$. In a play of the game $\Gone(\mathcal B,\mathcal B)$, Player I chooses $A_n\in\mathcal B$ in inning $n$ and Player II chooses $a_n\in A_n$. Player II wins if $\{a_n:n\in\omega\}\in\mathcal B$. In $\Gfin(\mathcal B,\mathcal B)$, Player II chooses a finite subset $F_n\subseteq A_n$ and wins if $\bigcup_{n\in\omega}F_n\in\mathcal B$. A strategy for a player is a function telling her how to play based on the previous moves of her opponent. We write $J\mathord\uparrow G$ when Player $J$ has a winning strategy in $G$, and $J\not\uparrow G$ for its negation, where $J\in\{\mathsf I,\mathsf{II}\}$.

Let $\mathbb P$ be a lattice with greatest element $1$. Following~\cite{Mezabarba}, we denote by $\Vone$ the family of all nonempty subsets of $\mathbb P$ whose supremum is $1$. By analogy with the case in which $\mathbb P$ is a topology, we call the members of $\Vone$ covers of $1$. A nonempty subset belongs to $\Vone$ if and only if no $q<1$ bounds it, and any subset of $\mathbb P$ containing a member of $\Vone$ also belongs to $\Vone$.

In the following, we shall use restricted forms of two distributivity conditions already present in the literature. The \emph{join-infinite distributive law} requires the map $a\mapsto x\wedge a$ to preserve every existing supremum for each $x\in\mathbb P$, see~\cite[Definition~2(ii)]{AC}. For increasing countable sequences, the corresponding identity occurs in the definition of $\aleph_0$-meet-continuity, where the existence of the supremum of every such sequence is also required~\cite[Section~8]{Wehrung}. Here we consider only families with supremum $1$ and, for convenience, use the following abbreviations for these restrictions.
\begin{enumerate}
\item[$\SC_1$:] If $(a_n)_{n\in\omega}$ is increasing and has supremum $1$, then for every $x\in\mathbb P$ the following supremum exists and is equal to $x$:
\[
 \bigvee_{n\in\omega}(x\wedge a_n)=x.
\]

\item[$\CD_1$:] For every $A\in\Vone$ and every $x\in\mathbb P$, the following supremum exists and is equal to $x$:
\[
 \bigvee_{a\in A}(x\wedge a)=x.
\]

\end{enumerate}

We denote by $\CD_{\omega,1}$ the second condition restricted to countable members of $\Vone$, and write $a_n\nearrow1$ when $(a_n)_{n\in\omega}$ is increasing with supremum $1$.

\begin{lemma}\label{lem:conditions}
Let $\mathbb P$ be a lattice with greatest element $1$.
\begin{enumerate}
\item $\CD_1\Rightarrow\CD_{\omega,1}\Rightarrow\SC_1$.
\item If $\mathbb P$ is distributive, then $\SC_1$ implies $\CD_{\omega,1}$.
\item If every cover of $1$ has a countable subcover, then $\CD_{\omega,1}$ implies $\CD_1$.
\end{enumerate}
\end{lemma}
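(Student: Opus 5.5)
The first part is immediate from the definitions. $\CD_{\omega,1}$ is literally $\CD_1$ restricted to countable members of $\Vone$, so $\CD_1\Rightarrow\CD_{\omega,1}$ needs no argument. Now suppose $a_n\nearrow1$. Then $A=\{a_n:n\in\omega\}$ is a countable nonempty set with supremum $1$, so $A\in\Vone$. Moreover, $\bigvee_{n}(x\wedge a_n)$ and $\bigvee_{a\in A}(x\wedge a)$ are suprema of the same set of elements, since repetitions in the sequence do not matter. Hence $\CD_{\omega,1}\Rightarrow\SC_1$.

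For the second part, assume $\mathbb P$ is distributive and satisfies $\SC_1$. Let $A=\{b_n:n\in\omega\}\in\Vone$, enumerated possibly with repetitions; if $A$ is finite, use an eventually constant enumeration. Put $a_n=b_0\vee\dots\vee b_n$, so that $(a_n)$ is increasing.
\begin{enumerate}
\item The upper bounds of $\{a_n\}$ coincide with those of $A$, so $a_n\nearrow1$.
\item By $\SC_1$, $x=\bigvee_n(x\wedge a_n)$.
\item Finite distributivity gives $x\wedge a_n=\bigvee_{i\le n}(x\wedge b_i)$.
\end{enumerate}
It remains to check that $x$ is the supremum of $\{x\wedge b_i:i\in\omega\}$. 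Clearly $x$ is an upper bound, since each $x\wedge b_i\le x$. If $u$ is any upper bound of the $x\wedge b_i$, then $u\ge\bigvee_{i\le n}(x\wedge b_i)=x\wedge a_n$ for every $n$, so $u\ge x$. The only point needing care is the bookkeeping for finite $A$ and repetitions, and I do not expect a real obstacle here.

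For the third part, assume every cover of $1$ has a countable subcover, and that $\CD_{\omega,1}$ holds. Let $A\in\Vone$ and $x\in\mathbb P$, and choose a countable $C\subseteq A$ with $C\in\Vone$. By $\CD_{\omega,1}$, $x=\bigvee_{c\in C}(x\wedge c)$. We must show $x$ is the least upper bound of $\{x\wedge a:a\in A\}$.
\begin{enumerate}
\item $x$ is an upper bound, since each $x\wedge a\le x$.
\item Any upper bound $u$ of $\{x\wedge a:a\in A\}$ bounds the subfamily indexed by $C$, hence $u\ge\bigvee_{c\in C}(x\wedge c)=x$.
\end{enumerate}
So the supremum exists and equals $x$, which is the mild subtlety of this part, namely verifying existence of the supremum rather than assuming it. No distributivity is needed in this part.
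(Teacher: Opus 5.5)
Your proof is correct and follows the paper's argument essentially verbatim: in (ii) you use the same partial joins $a_n=b_0\vee\dots\vee b_n$ together with finite distributivity and $\SC_1$, and in (iii) you use the same countable-subcover argument. Your extra bookkeeping for finite covers (via eventually constant enumerations) and your explicit check that the supremum exists simply spell out details the paper leaves implicit.
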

\begin{proof}
Assertion (i) is clear. For (ii), enumerate a countable cover as $(b_n)_{n\in\omega}$ and put $a_n=\bigvee_{j\le n}b_j$. Then $a_n\nearrow1$, so $\SC_1$ applies. For $x\in\mathbb P$, if $u$ bounds all $x\wedge b_j$, finite distributivity yields
\[
 x\wedge a_n=\bigvee_{j\le n}(x\wedge b_j)\le u,
\]
so $\SC_1$ implies $x\le u$. Since $x$ bounds the family of meets, it is its supremum.

For the last assertion, given $A\in\Vone$, take a countable subcover $A_0\subseteq A$. For each $x\in\mathbb P$, the meets of $x$ with the members of $A_0$ have supremum $x$, and adjoining the remaining meets, all below $x$, preserves that supremum.
\end{proof}

Following~\cite{Mezabarba}, an element $r$ of a lattice is \emph{prime} if it is not a greatest element and $a\wedge b\le r$ implies $a\le r$ or $b\le r$. A lattice has \emph{enough prime elements} if, whenever $a\nleq b$, some prime element $r\ge b$ satisfies $a\nleq r$.

\begin{proposition}\label{prop:primes}
Every lattice with greatest element $1$ and enough prime elements satisfies condition $\CD_1$.
\end{proposition}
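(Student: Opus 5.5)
We have to show that for every $A\in\Vone$ and every $x\in\mathbb P$, the element $x$ is the least upper bound of $\{x\wedge a:a\in A\}$. Clearly $x$ bounds every $x\wedge a$. So it suffices to show that every upper bound $u$ of the family satisfies $x\le u$. We argue by contradiction, using enough prime elements to separate $x$ from $u$.

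Suppose $u$ bounds all $x\wedge a$ with $a\in A$, but $x\nleq u$. By the definition of enough prime elements, there is a prime $r\ge u$ with $x\nleq r$. Fix $a\in A$. Then $x\wedge a\le u\le r$, and primality of $r$ gives $x\le r$ or $a\le r$. The first option is excluded, so $a\le r$. Since $a\in A$ was arbitrary, $r$ bounds $A$. Because $A\in\Vone$, the supremum of $A$ is $1$, and therefore $1\le r$, i.e.\ $r=1$. This contradicts the requirement that a prime element is not a greatest element. Hence $x\le u$, and so $\bigvee_{a\in A}(x\wedge a)$ exists and equals $x$.

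The argument is short, and the only delicate point is the existence requirement in $\CD_1$. The supremum of the meets is not assumed to exist; it is obtained by showing directly that $x$ is the least upper bound. That is why the argument runs over an arbitrary upper bound $u$ rather than manipulating $\bigvee_{a\in A}(x\wedge a)$ as a given element. No distributivity of $\mathbb P$ and no earlier result, such as Lemma~\ref{lem:conditions}, is needed. The hypothesis that prime elements are distinct from the top element is exactly what produces the contradiction.
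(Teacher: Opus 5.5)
Your proof is correct and follows essentially the same route as the paper: you separate $x$ from an arbitrary upper bound $u$ by a prime $r\ge u$, use primality to force every $a\in A$ below $r$, and get the contradiction with $\sup A=1$. Your explicit remark that $r=1$ contradicts the non-top clause in the definition of a prime is exactly the step the paper leaves implicit.
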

\begin{proof}
Fix $A\in\Vone$ and $x\in\mathbb P$, and let $u$ be an upper bound for $\{x\wedge a:a\in A\}$. If $x\nleq u$, we might choose a prime element $r\ge u$ with $x\nleq r$. Since $x\wedge a\le r$, primality gives $a\le r$ for every $a\in A$, contradicting $\sup A=1$. Thus $x\le u$, hence $\bigvee_{a\in A}(x\wedge a)=x$.
\end{proof}

In~\cite[Section~2]{Mezabarba}, bounded lattices with enough prime elements were called \emph{pre-Pawlikowski lattices}, and the term \emph{Pawlikowski lattice} was used when distributivity over every existing supremum was additionally assumed. This additional hypothesis is redundant. Indeed, let $v=\bigvee A$ be an existing supremum and fix $x$ in the lattice. The element $x\wedge v$ is an upper bound for $\{x\wedge a:a\in A\}$. If $u$ is another upper bound and $x\wedge v\nleq u$, choosing a prime element $r\ge u$ with $x\wedge v\nleq r$ gives $x\nleq r$ and $v\nleq r$. Since $v=\bigvee A$, there exists $a\in A$ with $a\nleq r$. As $x\wedge a\le u\le r$, primality yields $x\le r$ or $a\le r$, a contradiction. Thus $x\wedge v\le u$, proving that
\[
 x\wedge\bigvee A=\bigvee_{a\in A}(x\wedge a).
\]

\section{Hurewicz's theorem and the Menger game}
\label{sec:finite}

Let $\mathbb P$ be a lattice with greatest element $1$, and suppose that $\Sfin(\Vone,\Vone)$ holds. Fix a strategy $\sigma$ for Player~I in $\Gfin(\Vone,\Vone)$. By the reduction in~\cite[Section~1]{Mezabarba}, we may replace $\sigma$ by a strategy represented by a tree $(c_s)_{s\in\omega^{<\omega}\setminus\{\emptyset\}}$ with the following properties:
\begin{enumerate}
\item\label{item:increasing} For every $s\in\omega^{<\omega}$, the sequence $(c_{s\concat m})_{m\in\omega}$ is increasing with supremum $1$.
\item\label{item:zerochild} For every nonempty sequence $s\in\omega^{<\omega}$, $c_{s\concat0}=c_s$.
\end{enumerate}

In this way, Player~I starts with $\{c_{\langle m\rangle}:m\in\omega\}$, to which Player~II may respond by choosing just one element $c_{\langle m\rangle}$. Player~I then plays $\{c_{\langle m\rangle\concat k}:k\in\omega\}$, and the play continues in the same way.

The following version of~\cite[Lemma~1.2]{Mezabarba}, adapted from~\cite{ST}, replaces the hypothesis on prime elements by $\SC_1$. Here cofiniteness is taken in the set of indices $\omega^n$.

\begin{lemma}\label{lem:tails}
Let $\mathbb P$ be a lattice with greatest element $1$ satisfying $\SC_1$, and let $(c_s)_{s\in\omega^{<\omega}\setminus\{\emptyset\}}$ satisfy conditions~\ref{item:increasing} and~\ref{item:zerochild} above. Then, for every $n\ge1$ and every cofinite subset $J$ of $\omega^n$, the meet
\[
 t_J=\bigwedge_{s\in J}c_s
\]
exists, and the family $T_n=\{t_J:J\text{ is a cofinite subset of }\omega^n\}$
belongs to $\Vone$.
\end{lemma}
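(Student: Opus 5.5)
The plan has two parts. First, reduce each infinite meet $t_J$ to a finite meet using the monotonicity built into the tree. Second, prove $T_n\in\Vone$ by induction on $n$, with $\SC_1$ doing the work in the inductive step.

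\emph{Monotonicity along the tree.} By conditions~\ref{item:increasing} and~\ref{item:zerochild}, for every nonempty $s$ and every $m$ we have $c_s=c_{s\concat0}\le c_{s\concat m}$. By induction this gives $c_s\le c_r$ whenever $r$ extends $s$.

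\emph{Existence of $t_J$.} Let $F=\omega^n\setminus J$, which is finite, and fix $N$ larger than every coordinate of every member of $F$.
\begin{itemize}
\item Take $s\in J$ having some coordinate $\ge N$, and let $k$ be the first such index.
\item Put $s'=(s\restr k)\concat\langle N\rangle\concat 0^{\,n-k-1}$. Then $s'\in J$, since its $k$-th coordinate is $N$, which no member of $F$ has.
\item By the zero-child condition, $c_{s'}=c_{(s\restr k)\concat N}$. Since the children are increasing and $s_k\ge N$, this is $\le c_{s\restr(k+1)}$.
\item By monotonicity along the tree, $c_{s\restr(k+1)}\le c_s$.
\end{itemize}
So every $c_s$ with $s\in J$ lies above some $c_{s'}$ with $s'\in G=\{s\in J:\text{all coordinates of } s \text{ are } \le N\}$. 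Since $G\subseteq J$ is finite, the finite meet $\bigwedge_{s\in G}c_s$ exists and equals $t_J$.

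\emph{Base case $n=1$.} Here $t_J=c_{\langle\min J\rangle}$, and the sets $[N,\infty)$ are cofinite. Hence $T_1\supseteq\{c_{\langle N\rangle}:N\in\omega\}$, whose supremum is $1$ by condition~\ref{item:increasing}.

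\emph{Inductive step.} Assume $T_n\in\Vone$ and let $u$ bound $T_{n+1}$; it suffices to show that $u$ bounds $T_n$. Fix a cofinite $J\subseteq\omega^n$ with complement $F$. For each $N$ put
\[
 a_N=\bigwedge_{s\in F}c_{s\concat N},\qquad J'_N=\{r\in\omega^{n+1}:r\restr n\in J\}\cup\{s\concat m:s\in F,\ m\ge N\}.
\]
Then $J'_N$ is cofinite in $\omega^{n+1}$, since its complement is $\{s\concat m:s\in F,\ m<N\}$. We check that $t_J\wedge a_N\le c_r$ for every $r\in J'_N$:
\begin{itemize}
\item if $r\restr n\in J$, then $t_J\le c_{r\restr n}\le c_r$;
\item if $r=s\concat m$ with $s\in F$ and $m\ge N$, then $a_N\le c_{s\concat N}\le c_r$.
\end{itemize}
Therefore $t_J\wedge a_N\le t_{J'_N}\le u$ for every $N$.

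\emph{Closing the step.} The sequence $(a_N)_N$ is increasing, and it remains to show $a_N\nearrow1$. Then $\SC_1$ applied with $x=t_J$ gives $t_J=\bigvee_N(t_J\wedge a_N)\le u$. Since $J$ was arbitrary, $u$ bounds $T_n$, so $u=1$ by the inductive hypothesis.

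\emph{Main obstacle.} The delicate point is showing that a finite meet of sequences increasing to $1$ again has supremum $1$. I would argue by induction on the number of sequences, reducing to two sequences $b_N\nearrow1$ and $d_N\nearrow1$. Suppose $v$ bounds all $b_N\wedge d_N$. For fixed $M$ and $N\ge M$ we get $b_M\wedge d_N\le b_N\wedge d_N\le v$. Hence $\SC_1$ with $x=b_M$ gives $b_M=\bigvee_N(b_M\wedge d_N)\le v$, using only the tail $N\ge M$, whose supremum equals that of the whole increasing sequence. So $v$ bounds all $b_M$, and thus $v=1$. (If $F=\emptyset$, then $a_N=1$ and the step is trivial.)
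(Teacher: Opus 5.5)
Your proof is correct. Its skeleton matches the paper's: induction on $n$, keeping all children of nodes in a cofinite $J\subseteq\omega^n$ and only tails of children over the finitely many excluded nodes. You organize both halves differently, though.

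\textbf{Existence.} The paper uses the fiber decomposition $t_E=t_C\wedge\bigwedge_{s\in D}c_{s\concat r_s}$, and the existence of $t_C$ comes from the induction. You instead show directly, for all $n$ at once, that $t_J$ equals the finite meet over the box $G=\{s\in J:\text{all coordinates}\le N\}$.

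\textbf{Covering step.} The paper argues by contradiction from some $q<1$ bounding $T_{n+1}$. Starting from $t_{C_0}\nleq q$, it treats the excluded nodes $s_1,\dots,s_k$ one at a time. At each step it applies $\SC_1$ to $d_{j-1}$ and the single sequence $(c_{s_j\concat m})_{m\in\omega}$, choosing a separate threshold $m_j$ that keeps $d_j\nleq q$. You use one uniform threshold $N$ for all excluded nodes. This requires your auxiliary fact that, under $\SC_1$, termwise finite meets of sequences increasing to $1$ again increase to $1$. After that, a single application of $\SC_1$ to $t_J$ finishes the step.

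\textbf{Comparison.} Your route isolates a reusable closure property of $\SC_1$ and avoids the recursion. The paper's recursion needs no auxiliary lemma and mirrors the Boolean-algebra argument it is meant to generalize, namely preserving $d_j\nleq q$ step by step. Because the sequences are increasing, the paper's thresholds could be replaced by $\max_j m_j$ after the fact, so the two arguments carry the same content.
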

\begin{proof}
We follow the induction in~\cite[Lemma~1.2]{Mezabarba}. At level one, condition~\ref{item:increasing} gives $\bigwedge_{m\in J}c_{\langle m\rangle}=c_{\langle\min J\rangle}$ for every cofinite subset $J$ of $\omega$, and hence $T_1\in\Vone$. At the next level, for a cofinite subset $E$ of $\omega^{n+1}$ put $r_s=\min\{m:s\concat m\in E\}$, $D=\{s:r_s>0\}$ and $C=\omega^n\setminus D$. The set $D$ is finite, and conditions~\ref{item:increasing} and~\ref{item:zerochild} give, as in~\cite[Lemma~1.2]{Mezabarba},
\[
 t_E=t_C\wedge\bigwedge_{s\in D}c_{s\concat r_s},
\]
where the last factor is omitted if $D=\emptyset$. The element $t_C$ exists by induction, and only finitely many additional factors occur, so the right side exists because $\mathbb P$ is a lattice. It remains to replace the argument using prime elements to prove that these meets cover $1$.

To show that $T_{n+1}$ covers $1$, suppose that $q<1$ bounds it. By the induction hypothesis, choose a cofinite subset $C_0$ of $\omega^n$ with $t_{C_0}\nleq q$.
Enumerate $\omega^n\setminus C_0$ as $s_1,\ldots,s_k$, and set $d_0=t_{C_0}$. At step $j$, apply $\SC_1$ to $d_{j-1}$ and the sequence $(c_{s_j\concat m})_{m\in\omega}$ to obtain an index $m_j>0$ such that
\[
 d_{j-1}\wedge c_{s_j\concat m_j}\nleq q,
\]
otherwise all these meets, and hence their supremum $d_{j-1}$, would be below $q$. Increasing an index to make it positive preserves the inequality. Put $d_j=d_{j-1}\wedge c_{s_j\concat m_j}$.

Keep all indices $s\concat m$ with $s\in C_0$ and $m\in\omega$, and over each $s_j$ keep precisely those indices $s_j\concat m$ with $m\ge m_j$. The resulting set $E$ is a cofinite subset of $\omega^{n+1}$, and the displayed meet formula gives $t_E=d_k\nleq q$, a contradiction. Thus $T_{n+1}$ has supremum $1$, completing the induction.
\end{proof}

The recursive choice in this proof extends the Boolean algebra argument developed with Fab\'iola Loterio. There, $d_j\nleq q$ was expressed as $d_j\wedge\neg q\ne0$, and successive factors were chosen to keep this meet nonzero. The existence of each choice already followed from the distributive identity $s=\bigvee_{a\in A}(s\wedge a)$ for $s>0$ and $A\in\Vone$. Here the same identity is applied directly to $d_j$ to preserve $d_j\nleq q$, so the recursive argument applies without Boolean complements. Since the families used in these choices are increasing sequences, $\SC_1$ suffices.

\begin{theorem}\label{thm:menger}
Let $\mathbb P$ be a lattice with greatest element $1$ satisfying $\SC_1$. Then $\Sfin(\Vone,\Vone)$ holds if and only if Player~I has no winning strategy in $\Gfin(\Vone,\Vone)$.
\end{theorem}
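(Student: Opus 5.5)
The easy direction is standard. If Player~I has no winning strategy in $\Gfin(\Vone,\Vone)$, then in particular the strategy that plays a fixed sequence $(A_n)_{n\in\omega}$ regardless of Player~II's moves is not winning. Some play against it is therefore won by Player~II, and her finite choices $F_n\subseteq A_n$ witness $\Sfin(\Vone,\Vone)$ for that sequence. No lattice hypothesis is needed here.

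For the converse, assume $\Sfin(\Vone,\Vone)$ and fix a strategy $\sigma$ for Player~I. By the reduction recalled before Lemma~\ref{lem:tails}, we may assume $\sigma$ is given by a tree $(c_s)_{s\in\omega^{<\omega}\setminus\{\emptyset\}}$ satisfying conditions~\ref{item:increasing} and~\ref{item:zerochild}. With this representation it suffices to find a single branch $f\in\omega^\omega$ with $\{c_{f\restr n}:n\ge1\}\in\Vone$. Indeed, along that branch Player~II answers inning $n$ with one element, which is a legitimate finite choice, and she wins. I would also check that the reduction of \cite[Section~1]{Mezabarba} needs no prime elements: it only replaces Player~I's moves by finite joins, which are increasing and cover $1$, and a win against the reduced strategy transfers back to $\sigma$.

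By Lemma~\ref{lem:tails}, which is where $\SC_1$ enters, each $T_n=\{t_J:J\subseteq\omega^n\text{ cofinite}\}$ belongs to $\Vone$. Applying $\Sfin(\Vone,\Vone)$ to $(T_n)_{n\ge1}$ gives finitely many cofinite sets $J^n_1,\dots,J^n_{k_n}\subseteq\omega^n$ for each $n$ such that
\[
\{t_{J^n_i}:n\ge1,\ i\le k_n\}\in\Vone.
\]
Replacing $J^n_1,\dots,J^n_{k_n}$ by their intersection $J_n$ only lowers the meets and loses the cover property, so I would keep the individual sets. The complements $\omega^n\setminus J^n_i$ are finite, so I can choose, by recursion on $n$, a branch $f$ with $f\restr n\in\bigcap_{i\le k_n}J^n_i$ for every $n$. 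Each $J^n_i$ is cofinite, and the finitely many excluded nodes at level $n$ can be avoided by choosing $f(n-1)$ large: the excluded nodes extending $f\restr(n-1)$ are finitely many, and other nodes are irrelevant. Then $t_{J^n_i}\le c_{f\restr n}$ for all $n$ and $i$. Any upper bound $q$ of $\{c_{f\restr n}:n\ge1\}$ therefore bounds the selected cover, so $q=1$, and the branch is a win for Player~II.

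The main obstacle, already isolated in Lemma~\ref{lem:tails}, is showing that the tails $T_n$ cover $1$ without prime elements. The remaining points to verify carefully are that the tree reduction is valid in an arbitrary lattice with $1$, using only finite joins and no distributivity, and that the branch can be chosen to avoid all excluded nodes simultaneously. The latter holds because at each level only finitely many nodes are excluded.
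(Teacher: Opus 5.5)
Your proposal is correct and follows the paper's own proof: you reduce $\sigma$ to the tree $(c_s)$, use Lemma~\ref{lem:tails} to get $T_n\in\Vone$, apply $\Sfin(\Vone,\Vone)$ to $(T_n)_{n\ge1}$, and choose a branch through the resulting cofinite index sets. One harmless slip is that, since $J_n=\bigcap_{i\le k_n}J^n_i\subseteq J^n_i$, the meet $t_{J_n}$ lies \emph{above} each $t_{J^n_i}$, so passing to the intersection preserves the cover property rather than losing it---which is exactly what your choice $f\restr n\in J_n$ uses.
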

\begin{proof}
Assume $\Sfin(\Vone,\Vone)$ and fix a strategy for Player~I. Use the reduction above and Lemma~\ref{lem:tails} in the proof of~\cite[Theorem~1.1]{Mezabarba}. The remaining selection argument is unchanged: finite selections from the families $T_n$ determine cofinite sets of indices, through which a branch can be chosen, and its values have supremum $1$. The associated play defeats the strategy. The converse holds in general.
\end{proof}

Examples of lattices for which the equivalence in Theorem~\ref{thm:menger} fails will be given in Section~\ref{sec:realizations}.

\begin{corollary}\label{cor:menger_classes}
Let $\mathbb P$ be a frame, a Heyting algebra or a Boolean algebra, with greatest element $1$. Then $\Sfin(\Vone,\Vone)$ holds if and only if Player~I has no winning strategy in $\Gfin(\Vone,\Vone)$.
\end{corollary}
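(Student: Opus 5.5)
The plan is to reduce the corollary to Theorem~\ref{thm:menger}: it suffices to check that frames, Heyting algebras and Boolean algebras all satisfy $\SC_1$. By Lemma~\ref{lem:conditions}(i), $\CD_1$ implies $\SC_1$, so I will aim to verify the stronger condition $\CD_1$ in each class.

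Every Boolean algebra is a Heyting algebra, with $a\to b=\neg a\vee b$. Every frame is a Heyting algebra, with $a\to b=\bigvee\{c:c\wedge a\le b\}$, which is well defined by completeness and the frame distributive law. So it is enough to treat Heyting algebras. The frame case is in any case immediate: the frame law is exactly the join-infinite distributive law, so $\bigvee_{a\in A}(x\wedge a)=x\wedge\bigvee A=x\wedge 1=x$ for every $A\in\Vone$.

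For a Heyting algebra, I fix $A\in\Vone$ and $x\in\mathbb P$, and let $u$ be any upper bound of $\{x\wedge a:a\in A\}$. Then $x$ is itself an upper bound of this set, so it remains to show $x\le u$. By adjunction, $x\wedge a\le u$ gives $a\le x\to u$ for every $a\in A$. Thus $x\to u$ bounds $A$, and since $\sup A=1$, we get $x\to u=1$. Adjunction again yields $x=x\wedge1\le u$. Hence the supremum exists and equals $x$, so $\CD_1$ holds.

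Combining this with Lemma~\ref{lem:conditions}(i) and Theorem~\ref{thm:menger} gives the claim. I expect no serious obstacle. The only point needing care is to argue through arbitrary upper bounds rather than assuming $\bigvee A$ exists in a non-complete Heyting algebra; the adjunction argument above does exactly that.
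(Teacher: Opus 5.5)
Your proposal is correct and follows the paper's proof: both verify $\CD_1$ (hence $\SC_1$) in each class, directly from the frame law for frames and, for Heyting algebras, because $x\wedge(-)$ is a left adjoint and so preserves existing suprema, with Boolean algebras being Heyting; both then apply Theorem~\ref{thm:menger}. Your upper-bound argument through $x\to u$ is simply the paper's left-adjoint remark written out, and it correctly avoids assuming that $\bigvee A$ exists.
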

\begin{proof}
Each of these lattices satisfies $\CD_1$, and hence $\SC_1$. For a frame this follows from its definition. In a Heyting algebra, the map $x\wedge(-)$ is a left adjoint and preserves every existing supremum, and every Boolean algebra is Heyting. Apply Theorem~\ref{thm:menger}.
\end{proof}

\begin{remark}\label{rem:interval}
For an element $p$ of a lattice $L$, let $\mathsf{D}_p$ denote the sublattice of elements below $p$, and let $\Vp$ be the family of nonempty subsets of $L$ with supremum $p$. Replacing $1$ by $p$ and restricting $x$ to $\mathsf{D}_p$ in the preceding conditions defines $\SC_p$, $\CD_p$ and $\CD_{\omega,p}$. The supremum of a subset of $\mathsf{D}_p$ is $p$ in $L$ if and only if it is $p$ in $\mathsf{D}_p$. Consequently, taking $\mathbb P=\mathsf{D}_p$ in Theorem~\ref{thm:menger} gives the equivalence between $\Sfin(\Vp,\Vp)$ and $\Inwin\Gfin(\Vp,\Vp)$ whenever $\SC_p$ holds.
\end{remark}

\begin{remark}\label{rem:scheepers}
After the first version of this work appeared on arXiv, I learned of Scheepers's paper~\cite{ScheepersHurewicz}, which treats more general families in distributive lattices using the language of Hurewicz pairs. For distributive lattices, Theorem~\ref{thm:menger} follows from his Theorem~7, since $\SC_1$ gives the required condition on the linearization of Hurewicz trees. The approach developed here aims to generalize the proof of Szewczak and Tsaban~\cite{ST}, identifying the lattice hypotheses needed for their argument.
\end{remark}

\section{Pawlikowski's theorem and the Rothberger game}
\label{sec:one}

Let us recall the proof strategy used in~\cite[Section~2]{Mezabarba}. First, we use the following form of~\cite[Lemma~2.1]{Mezabarba}.

\begin{lemma}\label{lem:product}
Let $(L_k)_{k\in\omega}$ be lattices with respective greatest elements $1_k$. If each $L_k$ satisfies $\Sfin(\mathsf V_{1_k},\mathsf V_{1_k})$, then $\prod_{k\in\omega}L_k$ satisfies $\Sfin(\Vone,\Vone)$, where $1=(1_k)_{k\in\omega}$. If each $L_k$ satisfies $\SC_{1_k}$, then the product satisfies $\SC_1$.
\end{lemma}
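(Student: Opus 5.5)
The plan is to work coordinatewise in the product $\mathbb P=\prod_{k\in\omega}L_k$, where order, finite meets and finite joins are computed componentwise. The basic fact I will use is that a family $A\subseteq\mathbb P$ has supremum $1$ if and only if, for every $k$, its projection $\pi_k[A]$ has supremum $1_k$ in $L_k$. Indeed, $q<1$ bounds $A$ exactly when some coordinate $q_k<1_k$ while the other coordinates may be taken to be $1_j$. The same reasoning shows that a supremum of a family in the product exists and equals $x$ as soon as each coordinate supremum exists and equals $x_k$. Nonemptiness of projections is automatic.

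For $\Sfin$, I will follow the proof of~\cite[Lemma~2.1]{Mezabarba}. Given $(A_n)_{n\in\omega}$ in $\Vone$, I partition $\omega$ into infinitely many infinite sets $N_k$. For each $k$, the sequence $(\pi_k[A_n])_{n\in N_k}$ consists of covers of $1_k$, so $\Sfin(\mathsf V_{1_k},\mathsf V_{1_k})$ applied along $N_k$ gives finite sets $G_n\subseteq\pi_k[A_n]$, for $n\in N_k$, whose union has supremum $1_k$. For each $n\in N_k$ I then pick a finite $F_n\subseteq A_n$ with $\pi_k[F_n]\supseteq G_n$, choosing one preimage for each element of $G_n$.

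The step I expect to be the main obstacle is that $\bigcup_n F_n$ need not cover $1$ coordinatewise. The $k$-th coordinate is fine, but $F_n$ for $n\in N_k$ only controls coordinate $k$. Since each $N_k$ is handled independently and every coordinate $k$ is served by its own $N_k$, the projection of $\bigcup_n F_n$ onto coordinate $k$ contains $\bigcup_{n\in N_k}G_n$. That union has supremum $1_k$, and enlarging a cover preserves covering, so the problem does not actually arise. Hence $\bigcup_n F_n\in\Vone$ by the basic fact. The only care needed is to verify that fact with an explicit $q$: for a bound $q<1$, choose $k$ with $q_k<1_k$, and then $q_k$ bounds $\pi_k[A]$.

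For $\SC_1$, take $a_n\nearrow1$ in $\mathbb P$. Each coordinate sequence $(a_n)_k$ is increasing, and by the basic fact it has supremum $1_k$. For $x\in\mathbb P$, $\SC_{1_k}$ gives $\bigvee_n(x_k\wedge(a_n)_k)=x_k$ in $L_k$. Any upper bound $u$ of $\{x\wedge a_n\}$ then satisfies $u_k\ge x_k$ for all $k$, so $u\ge x$. Since $x$ is itself an upper bound, the supremum exists and equals $x$.
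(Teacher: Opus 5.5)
Your proof is correct and is essentially the paper's. For $\Sfin$ the paper invokes the selection argument of \cite[Lemma~2.1]{Mezabarba}, which is your partition of $\omega$ into infinite sets $N_k$ with coordinate $k$ handled along $N_k$; for $\SC_1$ it uses the same coordinatewise reduction you give, and, as you concluded yourself, the ``obstacle'' you raise does not arise, because covering $1$ in the product is checked one coordinate at a time.
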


Since the selection argument in~\cite[Lemma~2.1]{Mezabarba} uses no prime elements, it remains valid here. For the additional assertion, an increasing sequence has supremum equal to the product's greatest element if and only if this holds in every coordinate, and $\SC_{1_k}$ applies in coordinate $k$ to its meets with a fixed element.

In the following argument, we may restrict Player~II to nonempty finite responses by replacing empty responses with singletons in the history supplied to the strategy.

\begin{lemma}\label{lem:strong}
Suppose that $\mathbb P$ is a lattice with least element $0$ and greatest element $1$, satisfies $\SC_1$, and satisfies $\Sfin(\Vone,\Vone)$. Every strategy for Player~I in $\Gfin(\Vone,\Vone)$ admits a play with nonempty finite responses $(F_n)_{n\in\omega}$ such that, for every $q<1$, the inequality
\begin{equation}\label{eq:strong}
 \bigvee F_n\nleq q
\end{equation}
holds for infinitely many $n\in\omega$.
\end{lemma}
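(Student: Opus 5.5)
The plan is to reduce the statement to the following tail condition. It suffices to produce a play against the given strategy, with nonempty finite responses $(F_n)_{n\in\omega}$, such that for every $k\in\omega$ the tail $\bigcup_{n\ge k}F_n$ belongs to $\Vone$. Indeed, suppose $q<1$ and $\bigvee F_n\le q$ for all $n\ge k$. Then $q$ bounds $\bigcup_{n\ge k}F_n$, which is impossible if that tail covers $1$. So~\eqref{eq:strong} holds for infinitely many $n$. As noted before the lemma, we may assume all responses are nonempty.

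\emph{Reduction.} First, pass to the tree representation $(c_s)_{s\in\omega^{<\omega}\setminus\{\emptyset\}}$ of the strategy, as in Section~\ref{sec:finite}. I will check that this reduction transfers the tail property back to the original play: responses in the tree game correspond to finite responses of the original game lying below them, as in~\cite[Section~1]{Mezabarba}, and this correspondence must preserve covering of each tail. Lemma~\ref{lem:tails} then gives $T_m\in\Vone$ for every $m\ge1$.

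\emph{Product and selection.} Next I would work in the product $\mathbb P^\omega$. By Lemma~\ref{lem:product}, it satisfies $\Sfin(\Vone,\Vone)$, where $1$ is the constant sequence. For each $n$ let
\[
 W_n=\prod_{k\in\omega}T_{n+k+1}\subseteq\mathbb P^\omega .
\]
Its coordinatewise supremum is $1$, so $W_n\in\Vone$ in the product. Applying $\Sfin$ gives finite $G_n\subseteq W_n$ with $\bigcup_nG_n\in\Vone$. Projecting to coordinate $k$ gives finite sets $H_{n,k}\subseteq T_{n+k+1}$ such that $\bigcup_nH_{n,k}$ covers $1$ in $\mathbb P$ for each fixed $k$. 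Regroup by level, putting
\[
 S_m=\bigcup_{n+k+1=m}H_{n,k}\subseteq T_m .
\]
Each $S_m$ is finite. For each $k$, the tail $\bigcup_{m>k}S_m$ contains $\bigcup_nH_{n,k}$, hence lies in $\Vone$. So the selection now has the required tail-covering property at the level of the families $T_m$.

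\emph{Choosing the branch.} Then I would run the branch argument from the proof of Theorem~\ref{thm:menger} on the finite sets $S_m$. Each $t_J\in S_m$ comes from a cofinite $J\subseteq\omega^m$. Intersecting finitely many such sets gives a cofinite set at each level, and one chooses a branch $s_1\subset s_2\subset\cdots$ with $s_m$ in all of them. Then $c_{s_m}\ge t$ for every $t\in S_m$. Taking $F_m=\{c_{s_m}\}$ (together with the corresponding original moves) gives a legal play against the strategy. For each $k$, $\bigvee_{m>k}c_{s_m}$ bounds $\bigcup_{m>k}S_m$, which has supremum $1$. Hence every tail of responses covers $1$, and the reduction in the first paragraph finishes the proof.

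\emph{Main obstacle.} I expect the delicate step to be the branch choice. The cofinite sets from different levels must be compatible along a single branch, exactly as in~\cite[Theorem~1.1]{Mezabarba}, and the translation of the tail property back through the tree reduction must be verified. The product bookkeeping itself seems routine. The hypothesis that $0$ exists may be needed for this translation, for instance to pad coordinates in the reduction. If it is, I would use $0$ there, and otherwise note that the argument does not need it.
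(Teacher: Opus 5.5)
There is a genuine gap, and it sits in the step you deferred: the tail property does not survive the tree reduction, so the product over the families $T_m$ cannot help. Conditions~\ref{item:increasing} and~\ref{item:zerochild} give $c_{s\concat m}\ge c_{s\concat 0}=c_s$, so along every branch $c_{s_1}\le c_{s_2}\le\cdots$. For such a sequence, $\{c_{s_m}:m>k\}$ covers $1$ exactly when the whole branch does. Your tree-level tail condition is therefore no stronger than what the plain branch argument of Theorem~\ref{thm:menger} already gives, with no product needed. The tree values are cumulative. Each dominates $c_{s_1}$, and the actual response to the given strategy at inning $m$ is a finite subset of the current move that only accounts for the passage from $c_{s_{m-1}}$ to $c_{s_m}$. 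So the claim that the correspondence ``must preserve covering of each tail'' fails in general. For instance, if $c_{\langle0\rangle}=1$, then every $c_s=1$, every selection and every branch pass your test, and the actual later responses are completely unconstrained. Also, $F_m=\{c_{s_m}\}$ is a play against the reduced strategy, not the given one, and for it the conclusion is trivial. Note that your route never really uses $0$, which is a warning sign.

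The missing idea is to apply the product to the strategy itself, before any tree reduction, so that each selected element is spent in a single coordinate. Replace each move $A$ by $\widetilde A=\{\delta_k(a):k\in\omega,\ a\in A\}\subseteq\mathbb P^\omega$, where $\delta_k(a)$ has value $a$ at coordinate $k$ and $0$ elsewhere; this is where the least element is needed. Each coordinate projection of $\widetilde A$ covers $1$. Finite responses in the product translate back to finite responses to $A$ by reading off nonzero coordinates, so the strategy induces one on $\mathbb P^\omega$. By Lemma~\ref{lem:product}, the product satisfies $\Sfin(\Vone,\Vone)$ and $\SC_1$, so Theorem~\ref{thm:menger}, used as a black box, yields a play whose selections cover the top. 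Let $(F_n)_{n\in\omega}$ be the translated responses, and fix $q<1$ and $N\in\omega$. The elements selected before inning $N$ have finitely many nonzero coordinates altogether; choose a coordinate $k$ outside them. Some selected element has $k$th coordinate $\nleq q$, since otherwise the element equal to $q$ at $k$ and $1$ elsewhere would bound all selections. That element was chosen at an inning $n\ge N$, and its $k$th coordinate lies in $F_n$, so $\bigvee F_n\nleq q$.
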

\begin{proof}
We follow the construction of~\cite[Proposition~2.2]{Mezabarba}. For $a\in\mathbb P$ and $k\in\omega$, let $\delta_k(a)\in\mathbb P^\omega$ have value $a$ at coordinate $k$ and $0$ elsewhere. Replace each move $A$ of the given strategy by
\[
 \widetilde A=\{\delta_k(a):k\in\omega,\ a\in A\}.
\]

Each coordinate projection of this family covers $1$, so $\widetilde A$ covers the greatest element of the product. A finite response in the product determines a finite response to $A$ by taking the nonzero coordinate values, and taking $0$ if the zero element was selected. Supplying these responses to the original strategy defines a strategy in the product.

The product satisfies $\Sfin$ and $\SC_1$ by Lemma~\ref{lem:product}. Thus Theorem~\ref{thm:menger} applies, yielding a defeating play with nonempty finite responses $(E_n)_{n\in\omega}$. Let $(F_n)_{n\in\omega}$ be the corresponding responses in $\mathbb P$. We claim that this sequence has the desired property.

Fix $q<1$ and $N\in\omega$. The elements selected before inning $N$ have only finitely many nonzero coordinates altogether, so choose a coordinate $k$ outside this finite set. Since the selected elements have supremum $1$ in the product, some selected element has its $k$th coordinate not below $q$. This element was selected at an inning $n\ge N$, and its nonzero coordinate value belongs to $F_n$, so $\bigvee F_n\nleq q$. Since $N$ was arbitrary, the required inequality holds infinitely often.
\end{proof}

The following equivalent formulation of $\CD_1$ will be used in place of prime elements in the selection argument. In the next lemma, $\mathord\downarrow r$ denotes the set $\{x\in\mathbb P:x\le r\}$.

\begin{lemma}\label{lem:fiber}
Let $\mathbb P$ be a lattice with greatest element $1$. The condition $\CD_1$ is equivalent to the following property: whenever $d,q\in\mathbb P$ and $d\nleq q$, there is $r<1$ such that
\begin{equation}\label{eq:fiber}
 \{x\in\mathbb P:d\wedge x\le q\}\subseteq\mathord\downarrow r.
\end{equation}

\end{lemma}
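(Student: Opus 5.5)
We prove the two implications separately, both by contraposition. The whole argument rests on one reformulation: the inclusion \eqref{eq:fiber} for some $r<1$ says exactly that the set $X_{d,q}=\{x\in\mathbb P:d\wedge x\le q\}$ is not a cover of $1$. We first justify this. If $X_{d,q}\subseteq\mathord\downarrow r$ with $r<1$, then $r$ bounds $X_{d,q}$, so $X_{d,q}\notin\Vone$. Conversely, suppose $X_{d,q}\notin\Vone$. The set $X_{d,q}$ is nonempty, since it contains $d\wedge q$ (or $0$ if it exists). Hence, by the characterization of $\Vone$ recalled in Section~\ref{sec:preliminaries}, some $r<1$ bounds it, and \eqref{eq:fiber} holds.

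For $\CD_1\Rightarrow$ \eqref{eq:fiber}, fix $d\nleq q$ and suppose, towards a contradiction, that $X_{d,q}\in\Vone$. Applying $\CD_1$ to the cover $A=X_{d,q}$ and the element $x=d$ gives
\[
d=\bigvee_{a\in X_{d,q}}(d\wedge a).
\]
Every term $d\wedge a$ is below $q$ by the definition of $X_{d,q}$. So $q$ is an upper bound of the family, which forces $d\le q$, a contradiction. Hence $X_{d,q}$ is not a cover of $1$, and by the reformulation some $r<1$ satisfies \eqref{eq:fiber}.

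For the converse, fix $A\in\Vone$ and $x\in\mathbb P$. The element $x$ is an upper bound of $\{x\wedge a:a\in A\}$. We must show it is the least one. Let $u$ be any upper bound and suppose $x\nleq u$. Apply the property with $d=x$ and $q=u$ to obtain $r<1$ with $X_{x,u}\subseteq\mathord\downarrow r$. Since $x\wedge a\le u$ for every $a\in A$, we have $A\subseteq X_{x,u}\subseteq\mathord\downarrow r$. Then $r<1$ bounds $A$, contradicting $\sup A=1$. Therefore $x\le u$, the supremum exists and equals $x$, and $\CD_1$ holds.

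The only delicate point is the existence of $\bigvee_{a\in A}(x\wedge a)$ in $\CD_1$. The argument handles it by exhibiting $x$ as the least upper bound directly, and no existing suprema are ever assumed. Nothing beyond the lattice axioms and the description of $\Vone$ is needed.
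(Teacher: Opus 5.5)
Your proof is correct and matches the paper's argument: one direction applies $\CD_1$ to the nonempty set $\{x\in\mathbb P:d\wedge x\le q\}$ to force $d\le q$. The other direction uses the bound $r<1$ on the corresponding set with $q=u$ to contradict $\sup A=1$. The only difference is cosmetic: the paper certifies nonemptiness via $q\in K$ rather than $d\wedge q$.
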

\begin{proof}
Assume $\CD_1$ and let $K$ be the family in~\eqref{eq:fiber}. Notice that $q\in K$. If the desired $r<1$ did not exist, then $K$ would cover $1$, and we would have $d=\bigvee_{x\in K}(d\wedge x)\le q$, a contradiction.

Conversely, fix $d\in\mathbb P$, let $A\in\Vone$ and let $y$ be an upper bound of $\{d\wedge a:a\in A\}$. If $d\nleq y$, the hypothesis gives an upper bound $r<1$ for $\{x\in\mathbb P:d\wedge x\le y\}$, which contains $A$, contrary to $\sup A=1$. Hence $d\le y$, proving $\CD_1$.
\end{proof}

\begin{lemma}\label{lem:diagonal}
Let $\mathbb P$ be a lattice with greatest element $1$ satisfying $\CD_1$ and $\Sone(\Vone,\Vone)$. If a sequence $(F_n)_{n\in\omega}$ of nonempty finite subsets of $\mathbb P$ satisfies the conclusion of Lemma~\ref{lem:strong}, then, for each $n\in\omega$, we can choose $f_n\in F_n$ so that $\sup_{n\in\omega}f_n=1$.
\end{lemma}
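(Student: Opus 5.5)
We will select $f_n$ by applying $\Sone(\Vone,\Vone)$ not to the families $\bigcup_{n\ge m}F_n$ themselves, but to families of finite meets of elements from \emph{distinct} indices. This is what lets a single selection per inning be converted into a genuine choice function. For $m\in\omega$, put
\[
 C_m=\Bigl\{f_0\wedge f_1\wedge\cdots\wedge f_m:\ f_i\in F_{n_i}\text{ for some }n_0<n_1<\cdots<n_m\Bigr\}.
\]
The plan is to prove that each $C_m$ belongs to $\Vone$, apply $\Sone$, and then unpack the selected meets greedily.

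\emph{Step 1: $C_m\in\Vone$.} This is where $\CD_1$ replaces prime elements, through Lemma~\ref{lem:fiber}, and it is the step I expect to need care. Suppose that $q<1$ bounds $C_m$. We build the meet one factor at a time while keeping it outside $\mathord\downarrow q$. Since $\bigvee F_n\nleq q$ for some $n$, we can pick $n_0$ and $f_0\in F_{n_0}$ with $d_0=f_0\nleq q$. Given $d_j=f_0\wedge\cdots\wedge f_j\nleq q$ with $n_0<\cdots<n_j$, Lemma~\ref{lem:fiber} gives $r<1$ with $\{x:d_j\wedge x\le q\}\subseteq\mathord\downarrow r$. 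By the conclusion of Lemma~\ref{lem:strong}, applied to $r$, there is $n_{j+1}>n_j$ with $\bigvee F_{n_{j+1}}\nleq r$. Because $F_{n_{j+1}}$ is finite, some $f_{j+1}\in F_{n_{j+1}}$ satisfies $f_{j+1}\nleq r$, and hence $d_{j+1}=d_j\wedge f_{j+1}\nleq q$. After $m$ steps we obtain an element of $C_m$ not below $q$, which is a contradiction. Hence $\sup C_m=1$.

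\emph{Step 2: selection and unpacking.} Apply $\Sone(\Vone,\Vone)$ to $(C_m)_{m\in\omega}$ to obtain $b_m\in C_m$ with $\sup_m b_m=1$. Fix for each $m$ a representation $b_m=f^m_0\wedge\cdots\wedge f^m_m$ with $f^m_i\in F_{n^m_i}$ and the indices $n^m_0<\cdots<n^m_m$ distinct. Recursively in $m$, at most $m$ indices have been assigned at earlier stages, while $b_m$ involves $m+1$ distinct indices. So some $i$ has $n^m_i$ not yet assigned, and we set $f_{n^m_i}=f^m_i$, which satisfies $f_{n^m_i}\ge b_m$. For every index $n$ never assigned, let $f_n$ be an arbitrary element of the nonempty set $F_n$.

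\emph{Step 3: conclusion.} Each $b_m$ lies below some chosen $f_n$. Hence every upper bound of $\{f_n:n\in\omega\}$ bounds $\{b_m:m\in\omega\}$, and is therefore $1$. Thus $\sup_n f_n=1$.

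Throughout, only finite meets are taken, which exist in any lattice. Finiteness of each $F_n$ is used exactly once, in Step~1, to pass from $\bigvee F_n\nleq r$ to a single element not below $r$.
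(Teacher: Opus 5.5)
Your proposal is correct and follows the paper's proof essentially step for step. The paper also uses the families $W_k$ of meets of $k+1$ elements drawn from sets $F_n$ with pairwise distinct indices, shows they cover $1$ by the same recursion through Lemma~\ref{lem:fiber} and the infinitely-often condition, and then applies $\Sone$ and takes from each selected meet a factor at an index not used before. The only cosmetic differences are that you order the indices increasingly and start the recursion from some $f_0\nleq q$ instead of from $d_{-1}=1$.
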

\begin{proof}
As in~\cite[Lemma~2.3]{Mezabarba}, for $k\in\omega$, let $W_k$ consist of meets of $k+1$ elements drawn from sets $F_n$ with pairwise distinct indices. We claim that $W_k\in\Vone$. To prove this, for each $q<1$ we shall construct a member of $W_k$ not below $q$, using Lemma~\ref{lem:fiber} in place of the prime elements used in~\cite[Lemma~2.3]{Mezabarba}.

Fix $q<1$. Starting with $d_{-1}=1\nleq q$, we recursively use Lemma~\ref{lem:fiber} to find an upper bound $r_j<1$ for the set $\{x\in\mathbb P:d_{j-1}\wedge x\le q\}$. Thus $x\nleq r_j$ implies $d_{j-1}\wedge x\nleq q$. By the hypothesis on $(F_n)_{n\in\omega}$, infinitely many indices $n$ satisfy $\bigvee F_n\nleq r_j$. We may therefore choose $n_j\notin\{n_0,\ldots,n_{j-1}\}$ and $x_j\in F_{n_j}$ with $x_j\nleq r_j$. Then
\[
 d_j=d_{j-1}\wedge x_j\nleq q.
\]

After $k+1$ steps, $d_k\in W_k$ is not below $q$, so no $q<1$ bounds $W_k$ and its supremum is $1$.

The rest of the proof of~\cite[Lemma~2.3]{Mezabarba} applies without change: $\Sone$ allows us to choose $w_k\in W_k$ for each $k\in\omega$ so that $\{w_k:k\in\omega\}\in\Vone$, and then choose one factor from each representation at an index not used in the preceding choices. The $k+1$ distinct indices in the $k$th representation ensure that such an index is available. Complete the remaining choices arbitrarily.
\end{proof}

If $\mathbb P$ has no least element, we may adjoin an element $0_*$ below every member of $\mathbb P$ in order to apply the product construction. Deleting $0_*$ from a cover of $1$ leaves a nonempty cover of $1$, so this extension preserves $\Sone$, $\Sfin$, $\SC_1$ and $\CD_1$. Every strategy whose moves are covers in $\mathbb P$ is also a strategy on the enlarged lattice, and its plays have the same outcomes.

The selection argument above uses $\CD_1$ for covers which need not be increasing, whereas $\SC_1$ sufficed for Hurewicz's theorem. In the next theorem, it is enough to assume $\CD_{\omega,1}$: under $\Sone$, every cover has a countable subcover, so Lemma~\ref{lem:conditions} gives $\CD_1$. Example~\ref{ex:binary_pawlikowski}, after Corollary~\ref{cor:distributive}, will show that $\SC_1$ alone could not replace this hypothesis.

\begin{theorem}\label{thm:rothberger}
Let $\mathbb P$ be a lattice with greatest element $1$. If $\CD_{\omega,1}$ holds, then $\Sone(\Vone,\Vone)$ holds if and only if Player~I has no winning strategy in $\Gone(\Vone,\Vone)$.
\end{theorem}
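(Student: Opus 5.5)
The converse is the trivial direction and holds in general. If Player~I has no winning strategy in $\Gone(\Vone,\Vone)$, suppose $(A_n)_{n\in\omega}$ is a sequence in $\Vone$. The strategy that plays $A_n$ in inning $n$, ignoring Player~II's moves, is not winning. Hence some play produces $a_n\in A_n$ with $\{a_n:n\in\omega\}\in\Vone$, which is $\Sone(\Vone,\Vone)$.

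For the main direction, assume $\Sone(\Vone,\Vone)$ and $\CD_{\omega,1}$. The first step is to upgrade the hypotheses. Under $\Sone$, every cover of $1$ has a countable subcover: apply $\Sone$ to the constant sequence $(A)_{n\in\omega}$. So Lemma~\ref{lem:conditions}(iii) gives $\CD_1$, and Lemma~\ref{lem:conditions}(i) gives $\SC_1$. Moreover, $\Sone$ implies $\Sfin$, so $\Sfin(\Vone,\Vone)$ holds as well. If $\mathbb P$ has no least element, I adjoin $0_*$ as described before the theorem; this preserves all four properties and all plays.

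Now fix a strategy $\sigma$ for Player~I in $\Gone(\Vone,\Vone)$. I convert it into a strategy $\tau$ for $\Gfin(\Vone,\Vone)$, following~\cite[Section~2]{Mezabarba}. Player~I plays the same covers, and a nonempty finite response $F$ is fed back as a finite branching of one-element responses. Concretely, at each stage $\tau$ keeps a finite set of $\sigma$-positions (finite partial plays of $\Gone$). After Player~II answers with finite sets, each position is extended by each chosen element lying in the cover $\sigma$ proposes at that position. The next move of $\tau$ is a single cover, for which I take the family of meets of finitely many moves. Its members are $\bigwedge_{p}a_p$, with $a_p$ ranging over $\sigma(p)$ for each current position $p$.

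The first obstacle is checking that this family of meets lies in $\Vone$. This follows from $\CD_1$ by iterating $d\wedge a$ over finitely many covers: if $u$ bounds all the meets, repeated use of $\CD_1$ gives $1\le u$. (Alternatively, I would use the standard reduction with trees of covers, where Player~II's finite choice corresponds to a finite set of nodes.)

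Apply Lemma~\ref{lem:strong} to $\tau$ to get a play with nonempty finite responses $(F_n)_{n\in\omega}$ satisfying~\eqref{eq:strong} for every $q<1$. Then Lemma~\ref{lem:diagonal}, which needs exactly $\CD_1$ and $\Sone$, picks $f_n\in F_n$ with $\sup_{n\in\omega}f_n=1$. Each $f_n$ is a meet $\bigwedge_p a_p$ of elements chosen along the positions. The final step is to turn this into a single $\sigma$-play whose moves form a cover.

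This final step is the hard part: the diagonal selection gives one element per inning, and these must be threaded consistently into one branch of $\sigma$. The first thing I would try is to keep the $\tau$-moves indexed so that inning $n$ of $\tau$ corresponds to one fixed $\sigma$-position $p_n$, scheduled by a bijection $\omega\to\omega^{<\omega}$ over the tree of positions, as in Pawlikowski's original argument and~\cite[Theorem~2.4]{Mezabarba}. Then $f_n\le a_{p_n}$ for the relevant component, and the branch following these choices has values bounding $f_n$ cofinally. If the bookkeeping at this step fails, I would fall back on citing the proof of~\cite[Theorem~2.4]{Mezabarba} verbatim, since the only uses of primes there are exactly Lemmas~\ref{lem:strong} and~\ref{lem:diagonal}, now available under $\CD_1$. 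Either way, the resulting $\sigma$-play has supremum at least $\sup_n f_n=1$, so $\sigma$ is not winning.
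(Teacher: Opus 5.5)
Up to the last step your route is the paper's. You use countable subcovers and Lemma~2.1 to get $\CD_1$, hence $\SC_1$, and $\Sone\Rightarrow\Sfin$. You adjoin $0_*$, show that finite families of meets are covers by iterating $\CD_1$, and feed the Szewczak--Tsaban auxiliary $\Gfin$-strategy to Lemmas~\ref{lem:strong} and~\ref{lem:diagonal}.

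The gap is the final threading step, which you leave open, and your first suggested repair would fail. If inning $n$ of $\tau$ is tied to a single node $p_n$ through a bijection $\omega\to\omega^{<\omega}$, the selected $f_n$ come from moves $\sigma(p_n)$ at unrelated nodes, so they do not form one play against $\sigma$. Any single branch passes only through a sparse set of innings. Lemma~\ref{lem:strong} guarantees $\bigvee F_n\nleq q$ only for infinitely many $n$, and Lemma~\ref{lem:diagonal} controls only the full sequence $(f_n)$, so nothing forces the innings on your branch to cover $1$. That kind of scheduling works in Theorem~\ref{thm:ufin} only because the $\Gammap$ condition controls every infinite set of innings. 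It is also incompatible with your own definition of $\tau$ as playing meets over all current positions.

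The observation you need is that the meet construction already makes the threading immediate, once the update rule is stated correctly. For each element of a move of $\tau$, fix one representation $\bigwedge_{p}a_p(f)$ with $a_p(f)\in\sigma(p)$, so that the set $P_n$ of current positions stays finite. Then put $P_{n+1}=\{p\concat a_p(f):p\in P_n,\ f\in F_n\}$. You extend $p$ by the $p$-factor of each selected meet, not by ``chosen elements lying in $\sigma(p)$'': the selected elements are meets, which in general do not belong to $\sigma(p)$.

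Now start at $p_0=\emptyset$ and let $p_{n+1}=p_n\concat a_{p_n}(f_n)$. Since $p_n\in P_n$ and $f_n\in F_n$, you get $p_{n+1}\in P_{n+1}$, so this is a legal play against $\sigma$. Moreover $a_{p_n}(f_n)\ge f_n$, so its supremum is at least $\sup_n f_n=1$. This is exactly the content of the paper's sentence that ``the corresponding factors determine a legal play against the original strategy, with each chosen factor above that selected element.'' With it, no fallback citation is needed for the bookkeeping.
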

\begin{proof}
Assume $\Sone(\Vone,\Vone)$ and fix a strategy for Player~I in $\Gone(\Vone,\Vone)$. Every cover has a countable subcover, so Lemma~\ref{lem:conditions} gives $\CD_1$. We may adjoin a least element if necessary, as explained above.

We use the construction of Szewczak and Tsaban~\cite{ST} invoked in~\cite[Theorem~2.4]{Mezabarba}, with Lemmas~\ref{lem:strong} and~\ref{lem:diagonal} in place of~\cite[Proposition~2.2 and Lemma~2.3]{Mezabarba}. The additional distributivity used there guarantees that finite common refinements of covers are covers. Here $\CD_1$ suffices: for $A,B\in\Vone$, if $u$ bounds every $a\wedge b$ with $a\in A$ and $b\in B$, then $a=\bigvee_{b\in B}(a\wedge b)\le u$ for every $a\in A$, so $u=1$. Iterating gives the required finite refinements.

Thus the construction gives an auxiliary strategy in $\Gfin$, to which Lemma~\ref{lem:strong} applies. Lemma~\ref{lem:diagonal} selects one element from each finite response with supremum $1$, and the corresponding factors determine a legal play against the original strategy, with each chosen factor above that selected element. This play also has supremum $1$. The converse holds in general.
\end{proof}

\begin{corollary}\label{cor:distributive}
Let $\mathbb P$ be a distributive lattice with greatest element $1$ satisfying $\SC_1$. Then $\Sone(\Vone,\Vone)$ holds if and only if Player~I has no winning strategy in $\Gone(\Vone,\Vone)$. In particular, this equivalence holds for frames, Heyting algebras and Boolean algebras.
\end{corollary}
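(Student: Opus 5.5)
The plan is to reduce to Theorem~\ref{thm:rothberger}, so it suffices to check that $\CD_{\omega,1}$ holds. Since $\mathbb P$ is distributive and satisfies $\SC_1$, Lemma~\ref{lem:conditions}(ii) gives $\CD_{\omega,1}$ directly. Theorem~\ref{thm:rothberger} then yields the equivalence between $\Sone(\Vone,\Vone)$ and $\Inwin\Gone(\Vone,\Vone)$. The direction from $\Inwin\Gone$ to $\Sone$ holds in general anyway, since a sequence of covers witnessing the failure of $\Sone$ defines a winning strategy for Player~I.

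For the second assertion, I would verify the two hypotheses for each class. Frames and Heyting algebras are distributive lattices, because a Heyting algebra is distributive (meet with a fixed element is a left adjoint and preserves finite joins), and a frame satisfies the infinite distributive law and hence the finite one. Boolean algebras are Heyting algebras. As noted in the proof of Corollary~\ref{cor:menger_classes}, each of these lattices satisfies $\CD_1$, and hence $\SC_1$ by Lemma~\ref{lem:conditions}(i). So the first part of the corollary applies to them.

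I do not expect a real obstacle here. The only point needing care is that Lemma~\ref{lem:conditions}(ii) supplies exactly the countable form $\CD_{\omega,1}$ required by Theorem~\ref{thm:rothberger}. This is consistent, because the proof of that theorem upgrades $\CD_{\omega,1}$ to $\CD_1$ using the countable subcovers given by $\Sone$.
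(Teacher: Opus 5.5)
Your proposal is correct and is essentially the paper's proof: Lemma~\ref{lem:conditions}(ii) turns distributivity plus $\SC_1$ into $\CD_{\omega,1}$, and Theorem~\ref{thm:rothberger} then applies. For frames, Heyting algebras and Boolean algebras, the paper uses $\CD_1$ (hence $\CD_{\omega,1}$) to apply the theorem directly, whereas you check distributivity and $\SC_1$ and go through the first part; both routes are fine.
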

\begin{proof}
Lemma~\ref{lem:conditions} gives $\CD_{\omega,1}$, so Theorem~\ref{thm:rothberger} applies. For the indicated classes, $\CD_1$ holds by the argument of Corollary~\ref{cor:menger_classes}.
\end{proof}

As in Remark~\ref{rem:interval}, applying Theorem~\ref{thm:rothberger} to $\mathsf{D}_p$ gives the equivalence for $\Gone(\Vp,\Vp)$ at any element $p$ of a lattice satisfying $\CD_{\omega,p}$. The same observation applies to the corollary.

The next example shows that the hypothesis of Theorem~\ref{thm:rothberger} cannot be replaced by $\SC_1$. The strategy of offering extensions of the preceding response is similar to the branch construction in~\cite{ABD}. Here the infinite branches are themselves elements of the lattice.

\begin{example}\label{ex:binary_pawlikowski}
Let $L=2^{<\omega}\cup2^\omega\cup\{1\}$, where $1$ is an additional greatest element and the binary sequences are ordered by extension. Then $L$ is a lattice of cardinality $\mathfrak c$ satisfying $\SC_1$ and $\Sone(\Vone,\Vone)$, but Player~I has a winning strategy in $\Gone(\Vone,\Vone)$.
\end{example}
\begin{proof}
The least element is the empty sequence. Comparable sequences have their larger element as join and their smaller element as meet, whereas incomparable sequences have join $1$ and meet equal to their longest common initial segment. A nonempty family $A$ covers $1$ if and only if it contains $1$ or two incomparable sequences. Indeed, otherwise $A$ is a chain of sequences and its supremum is $\bigcup A\in2^{<\omega}\cup2^\omega$. An increasing sequence with supremum $1$ must therefore have a term equal to $1$, so $\SC_1$ holds trivially.

For two covers $A,B\in\Vone$, we claim that there are $a\in A$ and $b\in B$ with $a\vee b=1$. If either cover contains $1$, this is immediate. Otherwise choose incomparable $a_0,a_1\in A$. If no such pair $a,b$ existed, every $b\in B$ would be comparable with both $a_0$ and $a_1$, hence $b\le a_0\wedge a_1<1$, contradicting $\sup B=1$. Given a sequence $(A_n)_{n\in\omega}$ of covers, choose such elements from $A_0$ and $A_1$, and make arbitrary choices in the remaining covers. Their supremum is $1$, proving $\Sone$.

Player~I starts by offering $A_0=\{\langle0\rangle,\langle1\rangle\}$. After Player~II chooses $s_0\in A_0$, the next move is $A_1=\{s_0\concat0,s_0\concat1\}$. In general, after a response $s_n$, Player~I offers
\[
 A_{n+1}=\{s_n\concat0,s_n\concat1\}.
\]

The two sequences in each move are incomparable, so it is a legal cover. This strategy forces all responses to lie on a single branch $f=\bigcup_{n\in\omega}s_n\in2^\omega$, and their supremum is $f<1$. Thus Player~I wins.

The failure of $\CD_{\omega,1}$ can also be seen directly. With $x=\langle0\rangle$ and $A=\{\langle1,0\rangle,\langle1,1\rangle\}$, we have $\bigvee A=1$, whereas $x\wedge a=\emptyset$ for every $a\in A$.
\end{proof}

\begin{figure}[htbp]
\centering
\[\begin{tikzcd}[column sep=small]
	&&& 1 &&& \\
	{\langle0\rangle_{n\in\omega}} & {} & {} & {} & {} & {} & {\langle1\rangle_{n\in\omega}} \\
	& {} & {} & {} & {} & {} \\
	{\langle0,0\rangle} && {\langle0,1\rangle} && {\langle1,0\rangle} && {\langle1,1\rangle} \\
	& {\langle0\rangle} &&&& {\langle1\rangle} \\
	&&& \emptyset
	\arrow[dotted, no head, from=1-4, to=2-2]
	\arrow[dotted, no head, from=1-4, to=2-4]
	\arrow[dotted, no head, from=1-4, to=2-5]
	\arrow[dotted, no head, from=1-4, to=2-6]
	\arrow[no head, from=2-1, to=1-4]
	\arrow["\vdots"{description}, draw=none, from=2-2, to=3-2]
	\arrow[dotted, no head, from=2-3, to=1-4]
	\arrow["\vdots"{description}, draw=none, from=2-3, to=3-3]
	\arrow["\vdots"{description}, draw=none, from=2-5, to=3-5]
	\arrow[no head, from=2-7, to=1-4]
	\arrow["\vdots"{description}, draw=none, from=3-4, to=2-4]
	\arrow["\vdots"{description}, draw=none, from=3-6, to=2-6]
	\arrow[dotted, no head, from=4-1, to=2-1]
	\arrow[dotted, no head, from=4-1, to=3-2]
	\arrow[dotted, no head, from=4-3, to=3-3]
	\arrow[dotted, no head, from=4-3, to=3-4]
	\arrow[dotted, no head, from=4-5, to=3-5]
	\arrow[dotted, no head, from=4-7, to=2-7]
	\arrow[dotted, no head, from=4-7, to=3-6]
	\arrow[no head, from=5-2, to=4-1]
	\arrow[no head, from=5-2, to=4-3]
	\arrow[no head, from=5-6, to=4-5]
	\arrow[no head, from=5-6, to=4-7]
	\arrow[no head, from=6-4, to=5-2]
	\arrow[no head, from=6-4, to=5-6]
\end{tikzcd}\]
\caption{The order in Example~\ref{ex:binary_pawlikowski}.}
\label{fig:binary_lattice}
\end{figure}

\section{The Hurewicz property}\label{sec:ufin}

The final paragraph of~\cite{Mezabarba} proposes a version of $\Ufin(\mathcal O,\Gamma)$ for lattices with enough prime elements. For a sequence of finite selections $(F_n)_{n\in\omega}$, the requirement is that their union have supremum $p$ and that $\bigvee F_n\nleq r$ for all but finitely many $n$, for every prime element $r$ with $p\nleq r$. We shall give an equivalent formulation under the hypothesis of enough prime elements and then prove a game characterization for arbitrary posets.

Let $\mathbb P$ be a poset and $p\in\mathbb P$. Write $\mathsf{D}_p=\{x\in\mathbb P:x\le p\}$ and let $\mathsf V_p(\mathbb P)$ be the family of nonempty subsets of $\mathbb P$ with supremum $p$ computed in $\mathbb P$. Every member of $\mathsf V_p(\mathbb P)$ is therefore contained in $\mathsf D_p$. We denote by $\Gammap$ the collection of sequences $(F_n)_{n\in\omega}$ of finite subsets of $\mathsf{D}_p$ such that, for every infinite subset $J$ of $\omega$,
\begin{equation}\label{eq:gamma}
 \bigcup_{n\in J}F_n\in\Vp.
\end{equation}

We write $\Ufin(\Vp,\Gammap)$ if every sequence $(A_n)_{n\in\omega}$ in $\Vp$ admits finite selections $F_n\subseteq A_n$ such that $(F_n)_{n\in\omega}\in\Gammap$.

\begin{proposition}\label{prop:gamma_test}
A sequence $(F_n)_{n\in\omega}$ of finite subsets of $\mathsf{D}_p$ belongs to $\Gammap$ if and only if both of the following conditions hold.
\begin{enumerate}
\item The sets $F_n$ are nonempty for all but finitely many $n$.
\item For every $q\in\mathbb P$ with $p\nleq q$, there is an element of $F_n$ not below $q$ for all but finitely many $n$.
\end{enumerate}
If $\mathbb P$ is a lattice with enough prime elements, it suffices in \textup{(ii)} to consider the prime elements $r$ for which $p\nleq r$.
\end{proposition}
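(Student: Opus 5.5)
The plan is to prove each direction by contradiction, using only the fact that a nonempty subset of $\mathsf D_p$ has supremum $p$ exactly when no $q\in\mathbb P$ with $p\nleq q$ bounds it. This fact must be checked first, since $\mathbb P$ is only a poset. If $S\subseteq\mathsf D_p$ has supremum $p$ and $u$ is an upper bound of $S$, then $p\le u$, so $S$ has no upper bound $q$ with $p\nleq q$. Conversely, suppose $S\subseteq\mathsf D_p$ is nonempty and every upper bound $u$ of $S$ satisfies $p\le u$. Since $p$ is itself an upper bound of $S$, it is then the least one, so $\sup S=p$.

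\emph{Forward direction.} Assume $(F_n)_{n\in\omega}\in\Gammap$.
\begin{itemize}
\item For (i): if infinitely many $F_n$ were empty, let $J$ be the set of those indices. Then $\bigcup_{n\in J}F_n=\emptyset$, which does not belong to $\Vp$, since its members are nonempty.
\item For (ii): suppose some $q$ with $p\nleq q$ has infinitely many $n$ with $F_n\subseteq\mathord\downarrow q$, and let $J$ be the set of those indices. Then $q$ bounds $\bigcup_{n\in J}F_n$. By the fact above, this union is not in $\Vp$, a contradiction.
\end{itemize}

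\emph{Converse.} Assume (i) and (ii), and fix an infinite $J\subseteq\omega$.
\begin{itemize}
\item By (i), $J$ contains an index $n$ with $F_n\ne\emptyset$, so the union $\bigcup_{n\in J}F_n$ is nonempty. It lies in $\mathsf D_p$ by assumption.
\item Let $q$ be an upper bound of the union with $p\nleq q$. By (ii), for all but finitely many $n$, and hence for some $n\in J$, the set $F_n$ has an element not below $q$. This contradicts $q$ being an upper bound.
\end{itemize}
By the fact above, the union has supremum $p$, so $(F_n)_{n\in\omega}\in\Gammap$.

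\emph{Prime refinement.} Assume $\mathbb P$ is a lattice with enough prime elements, and that (ii) holds for all primes $r$ with $p\nleq r$. Given $q$ with $p\nleq q$, the definition of enough prime elements yields a prime $r\ge q$ with $p\nleq r$. For all but finitely many $n$, some element of $F_n$ is not below $r$; since $q\le r$, that element is not below $q$ either. Hence (ii) holds for $q$. The reverse implication is trivial, because primes $r$ with $p\nleq r$ are among the elements $q$ considered in (ii).

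I expect no genuine obstacle. The only delicate points are that emptiness must be handled separately through (i), because members of $\Vp$ are nonempty, and that the supremum characterization must rely only on the existence of upper bounds, since $\mathbb P$ need not be a lattice.
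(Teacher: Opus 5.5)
Your proposal is correct and follows the paper's argument: the forward direction tests the defining condition of $\Gammap$ on the infinite set of empty selections and on the infinite set of selections contained in $\mathord\downarrow q$. The converse uses that $p$ bounds the union, since it lies in $\mathsf D_p$, while (i) and (ii) rule out emptiness and every upper bound $q$ with $p\nleq q$; the prime refinement separates $p\nleq q$ by a prime $r\ge q$, and your explicit check of the supremum criterion in a poset only spells out what the paper leaves implicit.
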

\begin{proof}
If $J=\{n\in\omega:F_n=\emptyset\}$ were infinite, then $\bigcup_{n\in J}F_n=\emptyset\notin\Vp$, contrary to~\eqref{eq:gamma}. Infinitely many selections contained in $\mathord\downarrow q$ for some $q$ with $p\nleq q$ would also violate~\eqref{eq:gamma}. Conversely, the two conditions ensure that the union in~\eqref{eq:gamma} is nonempty and every upper bound of it lies above $p$, so its supremum is $p$. For the last assertion, separate $p\nleq q$ by a prime element $r\ge q$ with $p\nleq r$. An element not below $r$ is not below $q$.
\end{proof}

\begin{remark}
For nonempty finite selections in a lattice, condition (ii) says that $\bigvee F_n\nleq q$ for all but finitely many $n$, for every $q$ with $p\nleq q$. If the lattice has enough prime elements, each such $q$ lies below a prime element $r$ with $p\nleq r$, so it suffices to require these inequalities for prime elements. This recovers the formulation in~\cite{Mezabarba} for nonempty selections. Without enough prime elements, inequalities involving only primes need not detect all upper bounds which prevent a family from covering $p$, which is why the present definition considers all of them.
\end{remark}

Recall that an open cover $\mathcal U$ of a space $X$ is a $\gamma$-cover if it is infinite and every point of $X$ belongs to all but finitely many members of $\mathcal U$, see~\cite[Section~1]{ScheepersI}. The family of these covers is denoted by $\Gamma$. In contrast, $\Gamma_p$ consists of sequences of finite selections, so repetitions at different indices matter.

If $\mathbb P=\mathcal O(X)$ and $V$ is a nonempty open set of $X$, condition~\eqref{eq:gamma}, with $p=V$, says that every $x\in V$ belongs to $\bigcup F_n$ for all but finitely many $n$. Indeed, infinitely many omissions of $x$ give an infinite set of innings whose selected union omits $x$. Conversely, eventual membership ensures that every infinite set of innings covers $V$. Thus~\eqref{eq:gamma} gives the usual indexed formulation of the Hurewicz property \cite{ScheepersI}.

The usual formulation of $\Ufin(\mathcal O,\Gamma)$ considers open covers with no finite subcover, see~\cite{ScheepersI}. The corresponding restricted principle here asserts that, for every sequence $(A_n)_{n\in\omega}$ in $\Vp$ such that no $A_n$ has a finite subfamily belonging to $\Vp$, there are finite subsets $F_n\subseteq A_n$ for which $\bigcup_{n\in J}F_n\in\Vp$ for every infinite subset $J$ of $\omega$. We claim that this is equivalent to $\Ufin(\Vp,\Gammap)$ as defined above.

One implication follows by restricting the sequences of covers. For the converse, let $(A_n)_{n\in\omega}$ be an arbitrary sequence in $\Vp$ and let $K$ be the set of innings in which $A_n$ has a finite subfamily $H_n\in\Vp$. Make that selection at each $n\in K$. If $\omega\setminus K$ is infinite, enumerate it and apply the restricted principle there. If it is finite, choose arbitrary finite selections at its indices.

Fix an infinite subset $J$ of $\omega$. If $J$ meets $K$, the union over $J$ contains a finite cover of $p$ and therefore belongs to $\Vp$. If $J\cap K=\emptyset$, then $\omega\setminus K$ is infinite and the restricted principle gives the same conclusion. Hence the full sequence belongs to $\Gammap$.

For open covers without finite subcovers, the unions of selections satisfying the indexed condition form a classical $\gamma$-cover. No proper union can occur infinitely often, since a point outside it would then be omitted infinitely often. Conversely, the usual formulation of $\Ufin(\mathcal O,\Gamma)$ yields indexed selections after passing to finite common refinements.

We denote by $\Gfin(\Vp,\Gammap)$ the game with the same moves as $\Gfin(\Vp,\Vp)$ in which Player~II wins if the sequence of finite responses belongs to $\Gammap$.

\begin{theorem}\label{thm:ufin}
For every poset $\mathbb P$ and every $p\in\mathbb P$, the principle $\Ufin(\Vp,\Gammap)$ holds if and only if Player~I has no winning strategy in $\Gfin(\Vp,\Gammap)$.
\end{theorem}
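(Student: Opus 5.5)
The plan is to prove the nontrivial direction, from $\Ufin(\Vp,\Gammap)$ to $\Inwin\Gfin(\Vp,\Gammap)$, by a direct diagonal argument. It needs no lattice operations, only two facts. First, $\Gammap$ is upward closed under enlarging the selections inside $\mathsf D_p$. Second, the $\Gammap$ condition already quantifies over \emph{all} infinite sets of indices, so the selections do not have to be organised along a single branch in advance.

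The converse is the routine direction. If $\Ufin(\Vp,\Gammap)$ fails, a witnessing sequence $(A_n)_{n\in\omega}$ in $\Vp$ yields the strategy for Player~I that plays $A_n$ in inning $n$ regardless of the history. Any play against it produces selections $F_n\subseteq A_n$ with $(F_n)_{n\in\omega}\notin\Gammap$, so this strategy is winning.

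\emph{Step 1 (countable subcovers).} Assume $\Ufin(\Vp,\Gammap)$. Applying it to the constant sequence $(A,A,\ldots)$ with $A\in\Vp$ gives finite $F_n\subseteq A$ with $\bigcup_{n\in\omega}F_n\in\Vp$. Hence every $A\in\Vp$ contains a countable member of $\Vp$.

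\emph{Step 2 (a countable tree of positions).} Fix a strategy $\sigma$ for Player~I. For each move $A$ of $\sigma$ that we encounter, fix a countable subcover $C(A)=\{a^A_0,a^A_1,\ldots\}\in\Vp$ from Step~1. Consider only responses of the form $I_m(A)=\{a^A_j:j<m\}$, the initial segments of this enumeration. Recursively assign to each $s\in\omega^{<\omega}$ a move $A_s$:
\begin{itemize}
\item $A_\emptyset=\sigma(\emptyset)$;
\item $A_{s\concat m}$ is $\sigma$'s answer to the history in which Player~II responded with $I_{s(i)}(A_{s\restr i})$ for $i\le|s|$, where $s(|s|)=m$.
\end{itemize}
Each $A_s$ is a legal position reached by a legal play, and there are only countably many of them.

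\emph{Step 3 (apply $\Ufin$ once).} Enumerate $\omega^{<\omega}$ injectively as $(s_k)_{k\in\omega}$ and apply $\Ufin(\Vp,\Gammap)$ to the sequence $(C(A_{s_k}))_{k\in\omega}$. This gives finite $G_k\subseteq C(A_{s_k})$ with $(G_k)_{k\in\omega}\in\Gammap$. Enlarge each $G_k$ to an initial segment $I_{m_k}(A_{s_k})$ for a suitable $m_k$. The enlarged sets remain inside $\mathsf D_p$, and every union over an infinite set of indices only grows while staying below $p$. So it still has supremum $p$, and the enlarged sequence is still in $\Gammap$.

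\emph{Step 4 (the defeating branch).} Define a branch $f\in\omega^\omega$ by letting $f(n)=m_k$ for the $k$ with $s_k=f\restr n$. The associated play has Player~II answer $A_{f\restr n}$ with $H_n=I_{m_{k_n}}(A_{f\restr n})$, where $s_{k_n}=f\restr n$; by construction of the tree this is a legal play against $\sigma$. The indices $k_n$ are pairwise distinct because the nodes $f\restr n$ are. Hence for every infinite $I\subseteq\omega$, the set $\bigcup_{n\in I}H_n$ is a union of enlarged selections over the infinite index set $\{k_n:n\in I\}$, and so it belongs to $\Vp$. Therefore $(H_n)_{n\in\omega}\in\Gammap$, Player~II wins this play, and $\sigma$ is not winning.

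The step I expect to need the most care is Step~2. Strategies in a poset see arbitrary finite responses, so the tree must be built so that every node corresponds to a genuinely legal history. Responses need not be nonempty, but $I_0=\emptyset$ is also a legal finite response, so this causes no trouble. The key observation that makes the argument work without any meets, unlike Lemma~\ref{lem:tails}, is that the $\Gammap$ condition holds for all infinite index sets. This lets one application of $\Ufin$ to the countably many tree moves simultaneously control every branch.
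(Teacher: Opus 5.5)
Your proof is correct and follows essentially the same route as the paper. Both arguments apply $\Ufin(\Vp,\Gammap)$ once to countable subcovers at all nodes of a countable tree of histories, then follow the branch determined by the selections, using that the $\Gammap$ condition ranges over every infinite set of indices. The only difference is cosmetic: the paper lets Player~II respond with arbitrary finite subsets of the chosen countable subcover, so the selections $D_i$ are played directly, whereas your restriction to initial segments needs the extra (valid) enlargement step, which uses that $\Gammap$ is closed under enlarging selections inside $\mathsf D_p$.
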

\begin{proof}
A sequence witnessing failure of the principle gives a winning strategy for Player~I by playing its successive covers. Conversely, assume $\Ufin(\Vp,\Gammap)$ and fix a strategy $\sigma$ for Player~I. Applying the principle to a constant sequence shows that every cover of $p$ has a countable subcover.

At each finite history $s$, choose a countable subset $C_s\subseteq\sigma(s)$ with $C_s\in\Vp$ and continue the construction after each finite response $D\subseteq C_s$. The resulting tree of histories is countable. Enumerate its nodes without repetitions as $(s_i)_{i\in\omega}$ and apply $\Ufin$ to $(C_{s_i})_{i\in\omega}$, obtaining finite subsets $D_i\subseteq C_{s_i}$ with $(D_i)_{i\in\omega}\in\Gammap$.

Follow a branch by answering with $D_i$ whenever its current history is $s_i$. This response belongs to the retained tree, so the recursion continues. Let $(i_n)_{n\in\omega}$ be the indices of the visited histories. They are pairwise distinct, since their lengths strictly increase. For every infinite subset $J$ of $\omega$, the set $\{i_n:n\in J\}$ is infinite. Consequently,
\[
 \bigcup_{n\in J}D_{i_n}\in\Vp.
\]

Thus the sequence of actual responses belongs to $\Gammap$ and defeats $\sigma$.
\end{proof}

For sets of reals, Scheepers proved that the Hurewicz property is equivalent to Player~I having no winning strategy in the Hurewicz game~\cite[Theorem~27]{ScheepersI}. The proof of Theorem~\ref{thm:ufin} follows the same argument: we make finite selections at all nodes of a countable tree of finite histories and then follow a branch, using the indexed winning condition~\eqref{eq:gamma}. The same equivalence holds when Player~I is restricted to covers without finite subcovers, since the corresponding selection principle is equivalent to the full one, as shown above, and the proof defeats every strategy on the restricted domain. If there are no such covers, the principle is vacuous and Player~I has no legal first move whatsoever.

The implication from the Hurewicz property to the Menger property also holds for this formulation: $\Ufin(\Vp,\Gammap)$ implies $\Sfin(\Vp,\Vp)$ for every poset $\mathbb P$ and $p\in\mathbb P$. In fact, Theorem~\ref{thm:ufin} gives the stronger conclusion $\Inwin\Gfin(\Vp,\Vp)$, since every sequence in $\Gammap$ has its full union in $\Vp$.
The next section gives complete distributive lattices in which $\Sfin$ holds but Player~I wins $\Gfin$, so $\Ufin$ fails by Theorem~\ref{thm:ufin}.

\section{The least cardinalities of counterexamples}
\label{sec:realizations}\label{sec:cardinals}

We now consider how Theorems~\ref{thm:menger} and~\ref{thm:rothberger} can fail without their hypotheses. The examples below show that for each $\#\in\{1,\fin\}$ there is a complete distributive lattice satisfying $\mathsf S_\#(\Vone,\Vone)$ on which Player~I wins $\mathsf G_\#(\Vone,\Vone)$. We shall then estimate the least cardinalities of such counterexamples.

Following the terminology and notation of~\cite[Section~I]{Hrusak}, throughout this section all ideals $\mathcal I$ on a countably infinite set $D$ are proper and contain $[D]^{<\omega}$. Let us set $\mathcal I^+=\mathcal P(D)\setminus\mathcal I$, the collection of subsets of $D$ outside $\mathcal I$, which we call $\mathcal I$-positive sets. Recall that a family $\mathcal A\subseteq[D]^\omega$ is almost disjoint if distinct members have finite intersection. Finally, for an infinite almost disjoint family $\mathcal A$, the collection
\[
 \mathcal I(\mathcal A)=\{S\subseteq D:S\subseteq^*\bigcup\mathcal F
                         \text{ for some }\mathcal F\in[\mathcal A]^{<\omega}\}
\]
is a proper ideal on $D$ containing all finite subsets of $D$. 
Clearly, an infinite subset $S$ of $D$ whose intersection with each member of $\mathcal A$ is finite is $\mathcal I(\mathcal A)$-positive. This will be useful below.

The following selection property is a consequence of the selectivity of $\mathcal I(\mathcal A)^+$, due to Mathias~\cite{Mathias}, see also~\cite[Section~I]{Hrusak}.

\begin{lemma}\label{lem:adselection}
The selection principle
$\Sone(\mathcal I(\mathcal A)^+,\mathcal I(\mathcal A)^+)$ holds for every infinite almost disjoint family $\mathcal A$ on a countably infinite set $D$.
\end{lemma}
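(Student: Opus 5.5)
Write $\mathcal I=\mathcal I(\mathcal A)$. The plan is to prove the statement directly, without appealing to selectivity. Given a sequence $(S_n)_{n\in\omega}$ of $\mathcal I$-positive sets, we must pick points $a_n\in S_n$ so that $\{a_n:n\in\omega\}$ is again $\mathcal I$-positive.

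\emph{Reduction.} The case in which some single point lies in infinitely many $S_n$ needs care, since repeated choices collapse the image. It is simplest to aim for an injective selection whose range meets every finite union of members of $\mathcal A$ in a finite set. Such a range is infinite, and an infinite set that meets each member of $\mathcal A$ finitely is $\mathcal I$-positive: if it were almost contained in $\bigcup\mathcal F$ for finite $\mathcal F$, it would be almost contained in a finite set. This is the observation recorded just before the lemma, so it suffices to construct such a range.

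\emph{Construction.} Fix an enumeration $(A_k)_{k\in\omega}$ of a countable subfamily $\{A_k\}\subseteq\mathcal A$; the choice of this subfamily is discussed below. At stage $n$ choose
\[
 a_n\in S_n\setminus\Bigl(\{a_0,\dots,a_{n-1}\}\cup\bigcup_{k<n}A_k\Bigr).
\]
This set is nonempty because $S_n$ is positive, so $S_n\not\subseteq^*\bigcup_{k<n}A_k$, which leaves infinitely many available points. For each $k$, the range then meets $A_k$ only in $\{a_0,\dots,a_k\}$, a finite set.

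\emph{Main obstacle.} $\mathcal A$ may be uncountable, and members not among the $A_k$ could meet the range in an infinite set. I would handle this with a dichotomy. For $A\in\mathcal A$ outside the enumerated family, $A$ meets each $A_k$ finitely. If $\{a_n\}\cap A$ were infinite, however, no contradiction follows directly. So instead of fixing the countable family in advance, I would make a diagonal choice that uses the sets themselves. Let $\mathcal B=\{A\in\mathcal A: A\cap S_n \text{ infinite for infinitely many } n\}$.

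If $\mathcal B$ is finite, say $\mathcal B\subseteq\{A_0,\dots,A_m\}$, the construction above with a slight modification works. At stage $n$, also avoid the finitely many members $A$ of $\mathcal A$ for which $A\cap S_n$ is infinite and which we have already been forced to consider. This is the delicate step.

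If $\mathcal B$ is infinite, I would instead pick distinct $B_k\in\mathcal B$ and choose $a_n$ from $B_{k(n)}\cap S_n$ along a partition of $\omega$. The goal is that the range meets infinitely many $B_k$ but each only finitely. Such a set is positive, because it cannot be almost covered by finitely many members of $\mathcal A$: the $B_k$ are almost disjoint.

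I expect the bookkeeping in the finite case to be the hardest part. If it becomes unwieldy, the fallback is to cite Mathias's theorem that $\mathcal I(\mathcal A)^+$ is selective (a happy family), and to deduce $\Sone$ from selectivity in the standard way through a diagonalization of a decreasing sequence of positive sets.
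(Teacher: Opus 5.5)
Your direct argument has a genuine gap, and it lies in the target of the Reduction rather than in the bookkeeping.

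\emph{The finite case.} An infinite set meeting every member of $\mathcal A$ in a finite set need not exist. If $\mathcal A$ is an infinite maximal almost disjoint family, every infinite subset of $D$ meets some member of $\mathcal A$ in an infinite set, and the lemma covers such families. Maximal families can also land in your finite-$\mathcal B$ case:
\begin{itemize}
\item On $D=\omega\times\omega$, fix infinite maximal almost disjoint families $\mathcal A_n$ on the columns $C_n=\{n\}\times\omega$. Extend $\bigcup_n\mathcal A_n$ to a maximal almost disjoint family $\mathcal A$ on $D$, and put $S_n=C_n$.
\item Each $C_n$ is positive, because a member of $\mathcal A_n$ outside a finite $\mathcal H\subseteq\mathcal A$ is not almost covered by $\bigcup\mathcal H$.
\item By maximality of $\mathcal A_n$, every member of $\mathcal A\setminus\mathcal A_n$ meets $C_n$ finitely. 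Hence each member of $\mathcal A$ meets at most one $S_n$ infinitely, so $\mathcal B=\emptyset$.
\end{itemize}
So no construction aiming at a range almost disjoint from all of $\mathcal A$ can succeed in that case.

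\emph{The infinite case.} Here the failure is different. A set meeting each $B_k$ in a finite set may lie inside a single member $H\in\mathcal A$ that meets every $B_k$ in a nonempty finite set. For instance, take the columns as the $B_k$, the row $H=\omega\times\{0\}$, $S_n=D$ and $a_n=(n,0)$. Almost disjointness among the $B_k$ says nothing about the other members of $\mathcal A$. This case can be repaired by reversing the target. Choose pairwise disjoint infinite sets $M_k\subseteq\{n:B_k\cap S_n\text{ is infinite}\}$ and select injectively from $B_k\cap S_n$ for $n\in M_k$. The range then meets every $B_k$ infinitely, and any finite $\mathcal H$ omits some $B_k$. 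The finite case admits no such repair.

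\emph{The missing idea.} Let the selection produce its own obstructions, as the paper does.
\begin{itemize}
\item Split $\omega$ into infinite blocks $J_k$. On block $k$, choose points injectively outside $\bigcup(\mathcal F_0\cup\dots\cup\mathcal F_{k-1})$, which positivity of each $S_n$ permits.
\item If some block range $Y_k$ is positive, you are done. Otherwise take a finite $\mathcal F_k\subseteq\mathcal A$ with $Y_k\subseteq^*\bigcup\mathcal F_k$. Since $Y_k$ misses the earlier unions, $\mathcal F_k$ can be taken disjoint from $\mathcal F_0,\dots,\mathcal F_{k-1}$.
\item If $Y\subseteq^*\bigcup\mathcal H$ for a finite $\mathcal H$, some $\mathcal F_k$ is disjoint from $\mathcal H$. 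The infinite set $Y_k$ is then almost contained in $\bigcup\mathcal F_k\cap\bigcup\mathcal H$, which is finite by almost disjointness.
\end{itemize}
This handles uncountable and maximal families uniformly, with no dichotomy.

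\emph{Your fallback.} The route through Mathias's theorem is the one the paper's attribution points to, and it can be completed. However, the ``standard way'' needs two specifics. First, the $S_n$ are not nested and their intersections need not be positive, so the sequence to diagonalize is the tails $\bigcup_{m\ge n}S_m$. Second, the diagonal set must then be thinned so that distinct points serve distinct indices $n$. This uses the fact that selectivity also yields positive selectors for partitions into finite pieces.
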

\begin{proof}
Let $(X_n)_{n\in\omega}$ be a sequence in $\mathcal I(\mathcal A)^+$ and consider a partition $(J_k)_{k\in\omega}$ of $\omega$ into infinite subsets, and enumerate each $J_k$ as $(j(k,m))_{m\in\omega}$. We have to choose $x_n \in X_n$ for each $n\in\omega$ in such a way that $Y=\{x_n:n\in\omega\}$ is positive. In order to accomplish this, we shall write $Y=\bigcup_{k\in\omega}Y_k$, with $Y_k=\{x_n:n\in J_k\}$ for each $k\in\omega$, and the construction of these latter sets will be done by induction on $k$.

For $k=0$, choose the points $x_{j(0,m)}\in X_{j(0,m)}$ recursively on $m$, avoiding the finitely many points already chosen in previous steps. Since each $X_n$ is positive and therefore infinite, these choices are possible and $Y_0=\{x_n:n\in J_0\}$ is infinite. If $Y_0$ is positive, complete all remaining choices of $Y_k$ with $k>0$ arbitrarily, as this ensures that $Y$ is positive. Otherwise, we may choose a finite subfamily $\mathcal F_0\subseteq\mathcal A$ such that $Y_0\subseteq^*\bigcup\mathcal F_0$.

Suppose now that the choices have been made on $J_0,\ldots,J_k$ and that the finite subfamilies $\mathcal F_0,\ldots,\mathcal F_k\subseteq\mathcal A$ are pairwise disjoint. As each $\mathcal F_i$ is a finite subset of $\mathcal A$, it follows that $\mathcal F=\bigcup_{i\leq k}\mathcal F_i$ is finite as well, hence $X_n\setminus \left(H\cup \bigcup\mathcal F\right)$ is infinite for all $n$ and for every finite subset $H$ of $D$. This allows us to recursively choose
\[
x_{j(k+1,m)}\in X_{j(k+1,m)}\setminus
\bigl(\bigcup\mathcal F\cup\{x_{j(k+1,r)}:r<m\}\bigr).
\]
Now, the set $Y_{k+1}=\{x_n:n\in J_{k+1}\}$ is infinite and disjoint from $\bigcup\mathcal F$. If it is positive, complete the remaining choices arbitrarily. Otherwise, choose a finite subfamily $\mathcal F_{k+1}\subseteq\mathcal A$ witnessing $Y_{k+1}\in \mathcal I(\mathcal A)$. Since removing the members of $\mathcal F$ from $\mathcal F_{k+1}$ does not affect the witnessing property, the finite subfamilies remain pairwise disjoint.

In the worst case, i.e., when no $Y_k$ is positive, we have to check whether the set $Y=\{x_n:n\in\omega\}=\bigcup_{k\in\omega}Y_k$ is positive. Assuming the contrary, let us choose a finite subfamily $\mathcal H\subseteq\mathcal A$ such that $Y\subseteq^*\bigcup\mathcal H$. Since the families $\mathcal F_k$ are pairwise disjoint, some $\mathcal F_k$ is disjoint from $\mathcal H$. Almost disjointness implies that
\[
\left(\bigcup\mathcal F_k\right)\cap\left(\bigcup\mathcal H\right)
\]
is finite. But the infinite set $Y_k$ is almost contained in both unions, a contradiction, as no infinite set can be almost contained in a finite set. Hence $Y$ is positive, as required.
\end{proof}

The lattice that we shall use to transfer selection principles and games from ideals to lattices is essentially the closed-set lattice appearing in an example of Gotchev and Minchev~\cite{GotchevMinchev}. Let $D$ be countably infinite and let $\mathcal A\subseteq[D]^\omega$ be an infinite almost disjoint family. Put
\[
\mathcal C(\mathcal A)=
\{F\cup\bigcup\mathcal F:
F\in[D]^{<\omega},\ 
\mathcal F\in[\mathcal A]^{<\omega}\}
\]
and $L_{\mathcal A}=\mathcal C(\mathcal A)\cup\{D\}$. Gotchev and Minchev consider the corresponding construction for a maximal almost disjoint family, with an additional point playing the role of the greatest element. Maximality, however, is not needed for the lattice construction considered here\footnote{This point is also discussed at
\url{https://math.stackexchange.com/questions/4810010}.}.
Under inclusion, $L_{\mathcal A}$ is a complete bounded distributive lattice of cardinality $|\mathcal A|$, and its finite joins and meets are unions and intersections.

Moreover, the members of $\mathcal I(\mathcal A)$ are precisely the subsets of members of $\mathcal C(\mathcal A)$. Consequently, for every nonempty family $\mathcal U\subseteq L_{\mathcal A}$,
\begin{equation}\label{eq:positivecover}
\sup_{L_{\mathcal A}}\mathcal U=D
\quad\Longleftrightarrow\quad
\bigcup\mathcal U\in\mathcal I(\mathcal A)^+.
\end{equation}

Thus covers of the greatest element of $L_{\mathcal A}$ correspond naturally to positive sets of the ideal generated by $\mathcal A$. The next theorem makes this correspondence precise for both selection principles and games.
Before stating it, recall that two games are said to be equivalent if, for each player, there is a winning strategy in one game if and only if there is a winning strategy in the other, see~\cite{AD}. We write $\mathsf V_D=\mathsf V_D(L_{\mathcal A})$ below.

\begin{theorem}\label{thm:realization}
For each $\#\in\{1,\fin\}$, the following hold.
\begin{enumerate}
\item $\mathsf S_\#(\mathsf V_D,\mathsf V_D)$ holds if and only if $\mathsf S_\#(\mathcal I(\mathcal A)^+,\mathcal I(\mathcal A)^+)$ holds.
\item The games $\mathsf G_\#(\mathsf V_D,\mathsf V_D)$ and $\mathsf G_\#(\mathcal I(\mathcal A)^+,\mathcal I(\mathcal A)^+)$ are equivalent.
\end{enumerate}
\end{theorem}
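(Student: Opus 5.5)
The plan is to build two translation maps between moves and to use the equivalence~\eqref{eq:positivecover} for all bookkeeping. Going \emph{from lattice to ideal}, a cover $\mathcal U\in\mathsf V_D$ is sent to the positive set $\bigcup\mathcal U\subseteq D$; by~\eqref{eq:positivecover} this set lies in $\mathcal I(\mathcal A)^+$. Going \emph{from ideal to lattice}, a positive set $X$ is sent to the family of singletons
\[
 \mathcal U_X=\{\{x\}:x\in X\}\subseteq\mathcal C(\mathcal A)\subseteq L_{\mathcal A}.
\]
Since $\bigcup\mathcal U_X=X$ is positive, \eqref{eq:positivecover} gives $\mathcal U_X\in\mathsf V_D$. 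Responses are translated back and forth in the same way. A point $x\in X$ corresponds to the element $\{x\}\in\mathcal U_X$. An element $U\in\mathcal U$ does not correspond to a single point, since $U$ may be a large set or even $D$. Instead, I would use the following observation: if a set of chosen elements $\{U_n\}$ has supremum $D$, then $\bigcup_n U_n$ is positive. Because the ideal is generated by countably many sets, choosing one point from each $U_n$ need not give a positive set. This is where the care will be needed.

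For (i), I would first show that the ideal principle implies the lattice principle. Given covers $\mathcal U_n\in\mathsf V_D$, the sets $X_n=\bigcup\mathcal U_n$ are positive. Applying $\mathsf S_\#$ for the ideal, I choose $x_n\in X_n$ (or finite sets $F_n\subseteq X_n$) with positive union. For each chosen point I pick some $U\in\mathcal U_n$ containing it, and take these $U$ as the selections. Their union contains the selected positive set, hence is positive, and by~\eqref{eq:positivecover} their supremum is $D$. For the converse, given positive sets $X_n$, I apply the lattice principle to the covers $\mathcal U_{X_n}$. The selected singletons have union equal to the set of selected points, which is positive by~\eqref{eq:positivecover}. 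The argument is the same for $\#=1$ and for $\#=\fin$.

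For (ii), the same two translations convert strategies. Suppose Player~I has a winning strategy in the lattice game. Player~I in the ideal game plays $\bigcup$ of each lattice move. When II answers with a point $x_n$ (or a finite set), I feed back to the lattice strategy an element $U\in\mathcal U_n$ containing $x_n$ (finitely many such elements in the $\fin$ case). Since the lattice strategy wins, the fed responses have supremum different from $D$, so their union is in $\mathcal I(\mathcal A)$; the actual points lie inside that union, hence II loses. Suppose Player~I has a winning strategy in the ideal game. Then Player~I plays $\mathcal U_{X}$ in the lattice game, translating singleton responses back to points, and transfers the outcome through~\eqref{eq:positivecover}. Here Player~II may only choose singletons, because the moves consist of singletons, so the translation is exact. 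For Player~II's winning strategies the translations run in the opposite direction. A II-strategy in the ideal game yields one in the lattice game by picking, for each selected point, a cover element containing it; its union contains the positive set of points. A II-strategy in the lattice game yields one in the ideal game by answering $X$ with the points of the singletons selected from $\mathcal U_X$.

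The main obstacle is to make the translation of histories coherent. The strategies depend on previous opponent moves, so the auxiliary choice of a $U\ni x_n$ must be fixed by a choice function in advance. The transferred strategies then become genuine functions of histories, and the outcome comparisons above must be checked inning by inning. I also need to make sure that empty finite responses in the $\fin$ game are handled the same way on both sides. Everything else reduces to~\eqref{eq:positivecover} and the monotonicity of positivity under supersets.
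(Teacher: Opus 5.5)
Your proposal is correct and is essentially the paper's proof. It uses the same translations $X\mapsto\{\{x\}:x\in X\}$ and $\mathcal U\mapsto\bigcup\mathcal U$, a fixed choice of a member of $\mathcal U$ containing each point, and \eqref{eq:positivecover} to transfer selections and the strategies of both players, spelled out in more detail than the paper gives (your aside that the ideal is generated by countably many sets is inaccurate, since $\mathcal A$ can have size $\mathfrak c$, but the argument never uses it).
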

\begin{proof}
By~\eqref{eq:positivecover}, the correspondences $X\mapsto\widehat{X}=\{\{x\}:x\in X\}$ and $\mathcal U\mapsto\bigcup\mathcal U$ yield functions $\mathcal{I}(\mathcal{A})^+\to\mathsf V_D$ and $\mathsf V_D\to\mathcal{I}(\mathcal{A})^+$, respectively, which will be used to transfer selections and strategies between the two settings.
In the following, for every $\mathcal U\in\mathsf V_D$ and $x\in\bigcup\mathcal U$, fix $U(\mathcal U,x)\in\mathcal U$ containing $x$.

Notice that members of $\widehat X$ correspond exactly to elements of $X$, while finite subfamilies of $\widehat X$ correspond exactly to finite subsets of $X$ by taking unions. Thus, selections from $\widehat X$ may be identified with selections from $X$, and the union of the selected singletons is precisely the corresponding set of selected points. Conversely, if $x\in\bigcup\mathcal U$, then $U(\mathcal U,x)$ is a member of $\mathcal U$ containing $x$. Hence a one-point or finite selection from $\bigcup\mathcal U$ can be replaced by a corresponding one-point or finite selection from $\mathcal U$, whose union contains the selected points.
%
%
Assertions (i) and (ii) follow easily from these observations.
%
%
\end{proof}

The first construction below gives a lattice satisfying $\Sone$ together with a winning strategy for Player~I in $\Gone$, while the second does the same for $\Sfin$ and $\Gfin$. The first is the branch construction used in~\cite{ABD}, and the second is a version of Scheepers's example~\cite{ScheepersIII}, also presented in~\cite{ABD}.

\begin{proposition}\label{prop:branches}
On $D=\omega^{<\omega}$, let $\mathcal A=\{B_f:f\in\omega^\omega\}$, where $B_f=\{f\restr n:n\in\omega\}$. Then $\Sone(\mathcal I(\mathcal A)^+,\mathcal I(\mathcal A)^+)$ holds and Player~I wins $\Gone(\mathcal I(\mathcal A)^+,\mathcal I(\mathcal A)^+)$. Consequently $L_{\mathcal A}$ satisfies $\Sone(\Vone,\Vone)$ at its greatest element but Player~I wins $\Gone(\Vone,\Vone)$.
\end{proposition}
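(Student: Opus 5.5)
The plan is to verify three things in order. First, $\mathcal A$ is an infinite almost disjoint family on the countable set $D=\omega^{<\omega}$. Then $\Sone$ for the ideal follows from Lemma~\ref{lem:adselection}. Next comes an explicit winning strategy for Player~I. Finally, the lattice statement is transferred through Theorem~\ref{thm:realization}.

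For the first step, each $B_f$ is infinite, since its members have all lengths. If $f\ne g$, let $k$ be least with $f(k)\ne g(k)$. Then $B_f\cap B_g=\{f\restr n:n\le k\}$ is finite. The family is infinite (indeed of size $\mathfrak c$). Lemma~\ref{lem:adselection} then gives $\Sone(\mathcal I(\mathcal A)^+,\mathcal I(\mathcal A)^+)$ directly.

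For the strategy, Player~I imitates the branch construction. In inning $0$ she plays $X_0=\{\langle m\rangle:m\in\omega\}$. After Player~II has chosen $s_n$, she plays the set of immediate successors
\[
 X_{n+1}=\{s_n\concat m:m\in\omega\}.
\]
The step I expect to need the most care is checking that each such move is legal, i.e.\ $\mathcal I(\mathcal A)$-positive. Each $X_{n+1}$ is infinite, and any $B_f$ meets it in at most one point, namely $f\restr(|s_n|+1)$. So its intersection with every member of $\mathcal A$ is finite. By the remark preceding Lemma~\ref{lem:adselection}, this makes it positive.

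To see that the strategy wins, let $s_0,s_1,\ldots$ be Player~II's responses. They satisfy $s_{n+1}\supsetneq s_n$ with $|s_n|=n+1$, so $f=\bigcup_n s_n\in\omega^\omega$. The set of responses $\{s_n:n\in\omega\}$ is contained in $B_f$, hence lies in $\mathcal I(\mathcal A)$, and Player~II loses.

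For the final assertion, Theorem~\ref{thm:realization}(i) with $\#=1$ transfers $\Sone$ to $\Sone(\mathsf V_D,\mathsf V_D)$ on $L_{\mathcal A}$, whose greatest element is $D$. Theorem~\ref{thm:realization}(ii) gives that the games are equivalent, so Player~I also wins $\Gone(\mathsf V_D,\mathsf V_D)$, which is $\Gone(\Vone,\Vone)$ at the greatest element.
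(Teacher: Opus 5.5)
Your proposal is correct and follows the paper's own proof essentially step by step. It uses almost disjointness of the branches together with Lemma~\ref{lem:adselection} for $\Sone$, and the immediate-successor strategy, whose moves are positive because each meets every branch in at most one point. The lattice statements then transfer through Theorem~\ref{thm:realization}; you also make explicit the computation $B_f\cap B_g=\{f\restr n:n\le k\}$, which the paper leaves implicit.
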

\begin{proof}
Distinct branches have finite intersection, so Lemma~\ref{lem:adselection} applies. For the game $\Gone$, Player~I starts by offering $\{\langle m\rangle:m\in\omega\}$. After Player~II chooses $\langle m_0\rangle$, Player~I offers $\{\langle m_0,m\rangle:m\in\omega\}$. In general, after finitely many moves by Player~II, the last chosen node $s$ records all these choices, and Player~I plays $\{s\concat m:m\in\omega\}$. Each move is an infinite antichain and meets each branch in at most one point. It is therefore positive by the observation above, whereas the selected nodes lie on a single branch.

The lattice counterparts follow from Theorem~\ref{thm:realization}.
\end{proof}

\begin{proposition}\label{prop:scheepers}
There is an almost disjoint family $\mathcal A$ of size $\mathfrak c$ on a countable set such that $\Sfin(\mathcal I(\mathcal A)^+,\mathcal I(\mathcal A)^+)$ holds and Player~I wins $\Gfin(\mathcal I(\mathcal A)^+,\mathcal I(\mathcal A)^+)$. Consequently there is a complete bounded distributive lattice of size $\mathfrak c$ satisfying $\Sfin$ at its top on which Player~I wins $\Gfin$.
\end{proposition}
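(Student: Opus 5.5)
Here is the plan: realise Scheepers's example as an almost disjoint family of ``levels''. Take $D=\omega^{<\omega}$, or better $D=\bigcup_{n\ge1}\omega^n$. For each $f\in\omega^\omega$ let $A_f$ be the set of finite sequences $s$ with $|s|\ge1$ such that $s(i)\le f(i)$ for all $i<|s|-1$ and $s(|s|-1)=f(|s|-1)$. Several variants could serve. A cleaner choice, closer to the finite-selection game, is to let $\mathcal A=\{B_f:f\in\omega^\omega\}$ with
\[
 B_f=\{\langle f\restr n, k\rangle : n\in\omega,\ k\le f(n)\}
\]
on $D=\omega^{<\omega}\times\omega$. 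Here Player~I's move after history $s$ is $X_s=\{\langle s,k\rangle:k\in\omega\}$. A finite response from $X_s$ is bounded by some $m$, and Player~I then continues at $s\concat m$. I would first verify almost disjointness. If $f\ne g$ first differ at $n_0$, then $B_f\cap B_g$ only contains pairs $\langle f\restr n,k\rangle$ with $n\le n_0$ and $k\le\min(f(n),g(n))$, which is a finite set. This family has size $\mathfrak c$, so by Theorem~\ref{thm:realization} the lattice $L_{\mathcal A}$ (complete, bounded, distributive, of size $|\mathcal A|$) gives the second sentence once the ideal statements are proved.

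For Player~I's winning strategy, each move $X_s$ meets every $B_f$ in a finite set, so it is infinite and positive by the observation before Lemma~\ref{lem:adselection}. Player~II's finite responses $F_n\subseteq X_{s_n}$ are handled as follows. Player~I records $m_n=\max$ of the second coordinates (or $0$ if $F_n$ is empty) and plays $X_{s_n\concat m_n}$. The sequences $s_n$ then build a branch $f$ with $f(n)=m_n$, and every selected pair lies in $B_f$. So the union lies in $\mathcal I(\mathcal A)$ and Player~I wins.

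The main step, and the expected obstacle, is $\Sfin(\mathcal I(\mathcal A)^+,\mathcal I(\mathcal A)^+)$. The argument of Lemma~\ref{lem:adselection} already gives $\Sone$, hence $\Sfin$, for every infinite almost disjoint family. The lemma is stated for arbitrary infinite $\mathcal A$ on a countable set, so nothing new is needed: $\Sone$ implies $\Sfin$ trivially, since singletons are finite selections. I would therefore simply cite Lemma~\ref{lem:adselection}. The only care needed is that $D$ is countable, which holds here. I would double check that the first move $X_\emptyset$ and all later moves are genuinely positive. A set meeting each member of $\mathcal A$ finitely and being infinite cannot be almost covered by finitely many members, so they are.

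If one insists on $D$ being exactly the setting of Scheepers's example, the same scheme works after relabelling. I expect no real difficulty beyond checking the bookkeeping: the strategy must depend only on Player~II's previous moves, and empty responses must be handled, which the convention of taking $m_n=0$ does. Finally, I would remark that $\Ufin$ fails on this lattice by Theorem~\ref{thm:ufin}.
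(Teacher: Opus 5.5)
Your proof is correct and is essentially the paper's. The paper uses $D=R^{<\omega}\times\omega$ with $R=[\omega]^{<\omega}\setminus\{\emptyset\}$: a nonempty response extends the history by the exact finite set chosen, and an empty response leaves it unchanged. Your coding by the maximum (with $0$ after an empty response) always produces a branch $f$ and so avoids that case split, and both arguments then conclude with Lemma~\ref{lem:adselection} and Theorem~\ref{thm:realization}.

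Do discard your first candidate $A_f$, however, since it is not almost disjoint: if $f\ne g$ agree at infinitely many $n$, the sequences $\langle 0,\ldots,0,f(n)\rangle$ of length $n+1$ all lie in $A_f\cap A_g$.
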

\begin{proof}
Let $R=[\omega]^{<\omega}\setminus\{\emptyset\}$ and $D=R^{<\omega}\times\omega$. For $f\in R^\omega$, put
\[
 S_f=\{(f\restr n,j):n\in\omega,\ j\in f(n)\},
 \qquad \mathcal A=\{S_f:f\in R^\omega\}.
\]

Each $S_f$ is infinite, and if $f\ne g$, only finitely many histories are shared. Each of them contributes finitely many points, so $S_f\cap S_g$ is finite and the family has size $\mathfrak c$. For every $s\in R^{<\omega}$, the set $D_s=\{s\}\times\omega$ has finite intersection with each generator and is therefore positive.

Player~I starts at the empty history $s$ and offers $D_s$. A nonempty finite response determines $r\in R$ and changes the history to $s\concat r$. An empty response leaves the history unchanged. If there are only finitely many nonempty responses, their union is finite. Otherwise those responses determine $f\in R^\omega$, and their union is $S_f$. In both cases the outcome belongs to the ideal, so Player~I wins. Apply Lemma~\ref{lem:adselection} and Theorem~\ref{thm:realization}.
\end{proof}

These examples give counterexamples of size $\mathfrak c$. We next ask how small a poset, or a complete distributive lattice, can be if a selection principle holds but Player~I wins the corresponding game.

For $\#\in\{1,\fin\}$, define
\[
 \epsilon_\#^{\pos}=\min\{|\mathbb P|:\mathsf S_\#(\Vone,\Vone)\text{ holds and }\Iwin\mathsf G_\#(\Vone,\Vone)\},
\]
where $\mathbb P$ ranges over posets with greatest element $1$. Define $\epsilon_\#^{\cbdl}$ by restricting $\mathbb P$ to complete bounded distributive lattices. Propositions~\ref{prop:branches} and~\ref{prop:scheepers} show that all four minima exist and are at most $\mathfrak c$.

In the first arXiv version of this paper, we used the cardinal characteristics $\covM$, the covering number of the meager ideal, and $\mathfrak d$, the dominating number, to prove $\covM\le\epsilon_1^{\pos}$ and $\mathfrak d\le\epsilon_{\fin}^{\pos}$. Only after that version appeared did I learn that the more general result below follows from theorems of Dias~\cite[Theorems~3.2.4 and~3.2.5]{Dias}.

Dias considers families of subsets of a set $T$ recognized by a relation $R\subseteq H\times T$: a subset belongs to the family if, for each $q\in H$, it contains an element $a$ with $qRa$. In our case, take $T=\mathsf D_p$ and $H=\Bad(\mathbb P,p)=\{q\in\mathbb P:p\nleq q\}$, with $qRa$ meaning $a\nleq q$. If $H$ is nonempty, a subset of $T$ covers $p$ exactly when each $q\in H$ is related to one of its members. Since $|H|\le|\mathbb P|$, Dias's theorems give the assertions below. When $H$ is empty, $p$ is least and the assertions are immediate. As I have not found an English version of these results, we include a direct proof in the language of posets.

\begin{theorem}[Dias~\cite{Dias}]\label{thm:lower}
For every poset $\mathbb P$ and $p\in\mathbb P$:
\begin{enumerate}
\item If $|\mathbb P|<\covM$ and $\Sone(\Vp,\Vp)$ holds, then $\Inwin\Gone(\Vp,\Vp)$.
\item If $|\mathbb P|<\mathfrak d$ and $\Sfin(\Vp,\Vp)$ holds, then $\Inwin\Gfin(\Vp,\Vp)$.
\end{enumerate}
In particular, $\covM\le\epsilon_1^{\pos}$ and $\mathfrak d\le\epsilon_{\fin}^{\pos}$.
\end{theorem}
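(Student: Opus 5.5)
We work with $H=\Bad(\mathbb P,p)=\{q\in\mathbb P:p\nleq q\}$. If $H$ is empty, then $p$ is the least element, every nonempty subset of $\mathsf D_p$ covers $p$, and Player~II wins trivially; so assume $H\ne\emptyset$. A nonempty subset $Y\subseteq\mathsf D_p$ lies in $\Vp$ exactly when for every $q\in H$ some $y\in Y$ satisfies $y\nleq q$. Fix a strategy $\sigma$ for Player~I. As in the proof of Theorem~\ref{thm:ufin}, applying the selection principle to a constant sequence shows that every cover of $p$ has a countable subcover. We therefore prune $\sigma$ to a countable tree: at each history $s$ we keep a countable subcover $C_s\subseteq\sigma(s)$, enumerated as $(c_{s,m})_{m\in\omega}$. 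For $\Gone$ the responses are then indexed by $\omega^{<\omega}$, and a play corresponds to a branch $f\in\omega^\omega$. For $\Gfin$ we may, as in the reduction of Section~\ref{sec:finite}, pass to responses of the form $\{c_{s,0},\dots,c_{s,m}\}$, again indexed by $\omega^{<\omega}$.

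For (i), for each $q\in H$ consider
\[
 M_q=\{f\in\omega^\omega:\ c_{f\restr n,f(n)}\le q\text{ for all }n\}.
\]
This set is closed. The key step is to show that $M_q$ is nowhere dense: given a node $s$, the set $C_s$ covers $p$, so some $m$ has $c_{s,m}\nleq q$, and extending $s$ by that $m$ leaves $M_q$. Since $|H|\le|\mathbb P|<\covM$, the union $\bigcup_{q\in H}M_q$ does not cover $\omega^\omega$. Any branch $f$ outside it gives a play whose responses are not all below $q$, for each $q\in H$, so the responses cover $p$ and Player~II wins. Note that this argument does not even use $\Sone$ beyond the countable-subcover reduction. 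The only care needed is to check that $\covM$ bounds unions of nowhere dense sets in $\omega^\omega$, which is standard.

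For (ii), for each $q\in H$ and each node $s$, let $g_q(s)$ be the least $m$ with $c_{s,m}\nleq q$. Coding $\omega^{<\omega}$ as $\omega$, each $g_q$ lies in $\omega^\omega$, and there are fewer than $\mathfrak d$ of them. Hence some $h\in\omega^\omega$ is not dominated by any $g_q$, that is, $h(s)\ge g_q(s)$ for infinitely many nodes $s$. The main obstacle is that this holds only at infinitely many nodes, which need not lie on a single branch. To overcome it, I would use $\Sfin$ together with the strengthened dominating form: when $|H|<\mathfrak d$, the family $\{g_q\}$ is not dominating for \emph{any} countable reindexing. We build the branch by a diagonal interleaving that answers at depth $n$ with $\max$ over the finitely many nodes $s$ of index $\le n$ (in a fixed enumeration) of $h(s)$. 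This makes the response sets monotone increasing along the branch.

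An alternative plan, and the one I would actually pursue first, is to mimic Theorem~\ref{thm:ufin}'s argument via Dias's recognizer framework. Using $\Sfin$ on the countably many covers $C_s$, we obtain finite selections $D_s$ such that every cofinite/infinite subfamily still covers $p$ after a reindexing into $\omega$ by levels. The cardinal bound $|H|<\mathfrak d$ is then used to find one $h$ that simultaneously exceeds $g_q$ on some node along the branch defined by $h$ itself, for each $q$. The branch through $h$ then has responses not below $q$ for every $q\in H$, and Player~II wins. The final claims $\covM\le\epsilon_1^{\pos}$ and $\mathfrak d\le\epsilon_{\fin}^{\pos}$ follow immediately by taking $p=1$.
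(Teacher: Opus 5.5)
\textbf{Part (i) is fine; part (ii) has a genuine gap.} Your part (i) is correct and is essentially the paper's argument. You prune $\sigma$ to countable subcovers indexed by $\omega^{<\omega}$, check that each $M_q$ is closed and nowhere dense, and use $|H|\le|\mathbb P|<\covM$.

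In part (ii), after reducing to responses $\{c_{s,0},\dots,c_{s,m}\}$, the play along a branch $f$ is won by Player~II exactly when, for each $q\in H$, some $n$ satisfies $f(n)\ge g_q(f\restr n)$. This is the statement you need, and you never prove it.

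\textbf{Why your proposed repairs fail.} Applying non-domination to the $g_q$, viewed as functions on coded nodes, only gives for each $q$ infinitely many nodes $s$ with $h(s)\ge g_q(s)$. As you noticed, these nodes are scattered through the tree.
\begin{enumerate}
\item Your interleaving $f(n)=\max\{h(s):\text{index}(s)\le n\}$ does not fix this. The node visited at depth $n$ is $f\restr n$, and its index is typically far larger than $n$. Even if it were not, nothing forces $h$ to beat $g_q$ at nodes of the form $f\restr n$. So nothing relates $f(n)$ to $g_q(f\restr n)$.
\item The alternative plan simply asserts the existence of the required $h$.
\item Its intermediate claim that every \emph{infinite} subfamily of the $D_s$ covers $p$ is the $\Gammap$-type conclusion of $\Ufin$, which $\Sfin$ does not provide.
\item In a bare poset you also cannot build the branch from $\Sfin$ as in Section~\ref{sec:finite}, since the meets $t_J$ need not exist. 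The cardinal hypothesis must do all the work.
\end{enumerate}

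\textbf{The missing idea is compactness.} For $q\in H$, let $T_q$ be the tree of all $t\in\omega^{<\omega}$ with $t(i)<g_q(t\restr i)$ for every $i<|t|$.
\begin{enumerate}
\item At each node $t$ this tree has only the finitely many successors $m<g_q(t)$. Hence its body $K_q$, the set of branches whose play loses against $q$, is compact.
\item Concretely, every $f\in K_q$ satisfies $f(n)<g_q(f\restr n)\le b_q(n)$, where $b_q(n)=\max\{g_q(t):t\in T_q,\ |t|=n\}$. The key point is that $b_q$ is a single function on $\omega$, not on nodes.
\item Since $|H|<\mathfrak d$, the family $\{b_q:q\in H\}$ is not dominating. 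So some $f\in\omega^\omega$ has $f(n)>b_q(n)$ for some $n$, for every $q$, and therefore $f\notin K_q$ for all $q\in H$.
\item For each $q$, some $n$ then has $f(n)\ge g_q(f\restr n)$. The response at that inning contains $c_{f\restr n,g_q(f\restr n)}\nleq q$, so the responses along $f$ cover $p$.
\end{enumerate}
This is the paper's argument, phrased there as ``fewer than $\mathfrak d$ compact sets cannot cover $\omega^\omega$''. As in (i), $\Sfin$ is used only to obtain countable subcovers.
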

\begin{proof}
Under either selection hypothesis, every cover of $p$ has a countable subcover. Fix a strategy $\sigma$ for Player~I and restrict the responses to such subcovers. For the game $\Gone$, we adapt the category argument of Hru\v{s}\'ak~\cite[Proposition~II.1]{Hrusak}. Enumerate a countable subcover of $\sigma(\emptyset)$ as $(a_{\emptyset,j})_{j\in\omega}$. If Player~II chooses $a_{\emptyset,j_0}$, enumerate a countable subcover of the next move prescribed by $\sigma$ as $(a_{\langle j_0\rangle,j})_{j\in\omega}$. Continuing recursively, each $s\in\omega^{<\omega}$ records the indices of the preceding choices, and $(a_{s,j})_{j\in\omega}$ enumerates a countable subcover of the corresponding move. Choosing $a_{s,j}$ extends the history of indices to $s\concat j$. For $q\in\Bad(\mathbb P,p)$, put
\[
 C_q=\{f\in\omega^\omega:a_{(f\restr n),f(n)}\le q\text{ for every }n\in\omega\}.
\]

The set $C_q$ is closed. Indeed, if $f\notin C_q$, then for some $n$ we have $a_{(f\restr n),f(n)}\nleq q$, and every branch extending $f\restr(n+1)$ has the same property. It is nowhere dense because, for every finite history $s$, the next move has supremum $p\nleq q$, so some $a_{s,j}\nleq q$; the basic neighbourhood determined by $s\concat\langle j\rangle$ is then disjoint from $C_q$. If the strategy won, the selections along every branch would be bounded by some $q\in\Bad(\mathbb P,p)$, so the sets $C_q$ would cover the Baire space. Since $\covM$ is the least size of a cover of $\omega^\omega$ by closed nowhere dense sets~\cite[Section~II]{Hrusak}, this would require $|\mathbb P|\ge\covM$.

For the game $\Gfin$, enumerate countable subcovers of the moves and, for every history $s$ and every $m\in\omega$, put
\[
 F(s,m)=\{a_{s,0},\ldots,a_{s,m}\}.
\]

Restrict the responses to these nonempty initial segments. Again finite sequences record indices and determine the next moves of the original strategy. For $q\in\Bad(\mathbb P,p)$ let
\[
 K_q=\{f\in\omega^\omega:F(f\restr n,f(n))\subseteq\mathord\downarrow q\text{ for every }n\in\omega\}.
\]

The set $K_q$ is closed. At any history $s$, let $j(s,q)$ be the first index with $a_{s,j(s,q)}\nleq q$. Then
\[
 F(s,m)\subseteq\mathord\downarrow q\quad\Longleftrightarrow\quad m<j(s,q).
\]
Thus the tree of finite histories compatible with $K_q$ has only the finitely many successors $m<j(s,q)$ at $s$. It is finitely branching, and its body is $K_q$, so $K_q$ is compact. A winning strategy would again imply that these sets cover $\omega^\omega$. Since $\mathfrak d$ is the least size of a compact cover of $\omega^\omega$~\cite[Section~II]{Hrusak}, this would require $|\mathbb P|\ge\mathfrak d$.
\end{proof}

We now improve the upper bounds using two cardinal characteristics considered by Hru\v{s}\'ak. Following~\cite[Definition~I.1]{Hrusak}, an ideal $\mathcal I$ on $D$ is $+$-Ramsey if every nonempty tree $T\subseteq D^{<\omega}$ such that, for each $s\in T$, the set $\{x\in D:s\concat\langle x\rangle\in T\}$ belongs to $\mathcal I^+$ has a branch whose range belongs to $\mathcal I^+$. As in~\cite[Section~II]{Hrusak}, let $\ra$ be the least size of an infinite almost disjoint family $\mathcal A$ for which $\mathcal I(\mathcal A)$ is not $+$-Ramsey, and let $\aT$ be the least size of a partition of $\omega^\omega$ into nonempty compact sets.

\begin{proposition}\label{prop:upper}
We have $\epsilon_1^{\cbdl}\le\ra$ and $\epsilon_{\fin}^{\cbdl}\le\aT$.
\end{proposition}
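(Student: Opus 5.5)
The plan is to realize both bounds inside the lattices $L_{\mathcal A}$ of Section~\ref{sec:realizations}. Since $L_{\mathcal A}$ has cardinality $|\mathcal A|$, by Theorem~\ref{thm:realization} it suffices in each case to produce an infinite almost disjoint family $\mathcal A$ of the required size satisfying two conditions:
\begin{enumerate}
\item $\mathsf S_\#(\mathcal I(\mathcal A)^+,\mathcal I(\mathcal A)^+)$ holds;
\item Player~I wins $\mathsf G_\#(\mathcal I(\mathcal A)^+,\mathcal I(\mathcal A)^+)$.
\end{enumerate}
The selection half is automatic. Lemma~\ref{lem:adselection} gives $\Sone$ for every infinite almost disjoint family, and $\Sone$ trivially implies $\Sfin$. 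So all the work goes into the winning strategies.

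For $\epsilon_1^{\cbdl}\le\ra$, fix an infinite almost disjoint family $\mathcal A$ with $|\mathcal A|=\ra$ such that $\mathcal I(\mathcal A)$ is not $+$-Ramsey. Take a witnessing tree $T\subseteq D^{<\omega}$: every node has an $\mathcal I(\mathcal A)$-positive successor set, but every branch has range in $\mathcal I(\mathcal A)$. Player~I keeps track of a node $s\in T$, starting at the root, and offers the positive set $\{x:s\concat\langle x\rangle\in T\}$. When Player~II answers $x$, the current node becomes $s\concat\langle x\rangle$. The responses then enumerate the range of a branch of $T$, so the outcome lies in the ideal and Player~I wins.

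For $\epsilon_{\fin}^{\cbdl}\le\aT$, I would adapt the construction in Proposition~\ref{prop:scheepers}. Fix a partition $\{K_\alpha:\alpha<\aT\}$ of $\omega^\omega$ into nonempty compact sets, and let $T_\alpha$ be the finitely branching tree whose body is $K_\alpha$. Put $D=\omega^{<\omega}\times\omega$, and for each $\alpha$ define
\[
 A_\alpha=\{(s,j):s\in T_\alpha,\ j\le\max\{m:s\concat\langle m\rangle\in T_\alpha\}\}.
\]
Each $A_\alpha$ is infinite, and each node contributes only finitely many points to it. Player~I keeps a history $s$ and offers $D_s=\{s\}\times\omega$. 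A nonempty finite response $F$ moves the history to $s\concat\langle m\rangle$, where $m$ is the largest second coordinate occurring in $F$; an empty response leaves $s$ unchanged.
\begin{itemize}
\item If only finitely many responses are nonempty, the outcome is finite.
\item Otherwise the nonempty responses determine a branch $f$, which lies in some $K_\alpha$. Every point selected at the node $f\restr n$ has second coordinate at most $f(n)$, and $f(n)$ is a successor of $f\restr n$ in $T_\alpha$. Hence all these points lie in $A_\alpha$, and the outcome is in the ideal.
\end{itemize}
Each $D_s$ meets every $A_\alpha$ in a finite set, so it is positive by the observation before Lemma~\ref{lem:adselection}. Thus every move is legal.

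The main obstacle is checking that $\{A_\alpha:\alpha<\aT\}$ is almost disjoint, which is also what makes the family have size exactly $\aT$. For $\alpha\ne\beta$, I would show that $T_\alpha\cap T_\beta$ is finite. It is a finitely branching subtree of $\omega^{<\omega}$. If it were infinite, K\"onig's lemma would give an infinite branch in it, and that branch would lie in $K_\alpha\cap K_\beta=\emptyset$, a contradiction. Since points of $A_\alpha\cap A_\beta$ sit over common nodes, and each node contributes finitely many points, $A_\alpha\cap A_\beta$ is finite. A small care point is that the trees should be pruned, so that $T_\alpha$ is exactly the set of initial segments of elements of $K_\alpha$; then every intersection node lies on branches of both sets, which is what the K\"onig argument uses. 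Finally, $\aT$ is infinite, so the family is infinite, and Theorem~\ref{thm:realization} transfers both properties to $L_{\mathcal A}$, which has size $\aT$.
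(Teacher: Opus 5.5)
Your proposal is correct and follows the paper's proof. The first bound comes from letting Player~I offer the successor sets of a tree witnessing that $\mathcal I(\mathcal A)$ is not $+$-Ramsey. The second comes from the compact-partition construction, with K\"onig's lemma giving almost disjointness, and Lemma~\ref{lem:adselection} with Theorem~\ref{thm:realization} transferring both properties to $L_{\mathcal A}$.

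The only difference is cosmetic. You code each nonempty response by its largest second coordinate, so histories live in $\omega^{<\omega}$ and your $A_\alpha$ is read off directly from $T_\alpha$. The paper instead records the whole response as an element of $R=[\omega]^{<\omega}\setminus\{\emptyset\}$, partitions $R^\omega\cong\omega^\omega$, and takes $B_\alpha=\bigcup_{f\in K_\alpha}S_f$.
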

\begin{proof}
Choose an almost disjoint family $\mathcal A$ of size $\ra$ for which $\mathcal I(\mathcal A)$ is not $+$-Ramsey. A tree witnessing this failure gives a winning strategy for Player~I in $\Gone(\mathcal I(\mathcal A)^+,\mathcal I(\mathcal A)^+)$ by offering the immediate successors of the current node. Lemma~\ref{lem:adselection} and Theorem~\ref{thm:realization} give the first inequality.

For the second inequality, we adapt the construction in~\cite[Proposition~II.4]{Hrusak}, using $R,D,S_f,D_s$ from Proposition~\ref{prop:scheepers}. Since $R^\omega$ is homeomorphic to $\omega^\omega$, choose a partition $(K_\alpha)_{\alpha<\aT}$ of $R^\omega$ into nonempty compact sets, and put
\[
 B_\alpha=\bigcup_{f\in K_\alpha}S_f,
 \qquad \mathcal A=\{B_\alpha:\alpha<\aT\}.
\]

Each $B_\alpha$ is infinite. Let $T_\alpha$ be the tree of initial segments of members of $K_\alpha$. Compactness implies that $T_\alpha$ is finitely branching. Thus, for each $s\in R^{<\omega}$, only finitely many members $r$ of $R$ satisfy $s\concat\langle r\rangle\in T_\alpha$, and their union is finite, so $B_\alpha\cap D_s$ is finite. If $\alpha,\beta<\aT$ and $\alpha\ne\beta$, then $T_\alpha\cap T_\beta$ is finite. Otherwise, K\"onig's lemma would give a common branch, which would belong to $K_\alpha\cap K_\beta$ by closedness. Consequently $B_\alpha\cap B_\beta$ is finite, since its points can occur only at the finitely many common histories. Thus $\mathcal A$ is almost disjoint and has size $\aT$.

Each $D_s$ is positive, as it meets every member of $\mathcal A$ in a finite set. Player~I uses the strategy of Proposition~\ref{prop:scheepers}, offering $D_s$ and extending the history after each nonempty response. Empty responses leave the history unchanged. If there are only finitely many nonempty responses, their union is finite. Otherwise they determine $f\in R^\omega$, and the selected union $S_f$ is contained in the member $B_\alpha$ for which $f\in K_\alpha$. In either case the outcome belongs to $\mathcal I(\mathcal A)$, so Player~I wins. The second inequality follows from Lemma~\ref{lem:adselection} and Theorem~\ref{thm:realization}.
\end{proof}

Together with Theorem~\ref{thm:lower} and the inequalities $\ra\le\aT\le\mathfrak c$ from~\cite[Proposition~II.4]{Hrusak}, Proposition~\ref{prop:upper} gives
\begin{align*}
 \covM&\le\epsilon_1^{\pos}\le\epsilon_1^{\cbdl}\le\ra\le\mathfrak c,\\
 \mathfrak d&\le\epsilon_{\fin}^{\pos}\le\epsilon_{\fin}^{\cbdl}\le\aT\le\mathfrak c.
\end{align*}

Thus all four minima lie between $\aleph_1$ and $\mathfrak c$ and are small cardinals in the sense of~\cite{Blass}.

\section{Further comments and questions}\label{sec:questions}\label{sec:stone}

The Boolean algebra cases above have topological counterparts. We record the translation through Stone duality, which also explains the relation with dense families of open sets. Let $B$ be a nontrivial Boolean algebra and let $K=\operatorname{Ult}(B)$ be its Stone space. Recall from~\cite[Theorem~7.8]{Koppelberg} that $K$ is a compact zero-dimensional Hausdorff space and that the map $b\mapsto\widehat b$, where
\[
 \widehat b=\{u\in K:b\in u\},
\]
is a Boolean isomorphism from $B$ onto $\operatorname{Clop}(K)$, whose members form a base for $K$.

For every $A\subseteq B$, we have
\begin{equation}\label{eq:stone_density}
 A\in\Vone\quad\Longleftrightarrow\quad
 \overline{\bigcup_{a\in A}\widehat a}=K.
\end{equation}

Indeed, if the union is not dense, some nonempty clopen set $\widehat b$ misses it, so $\neg b<1$ is an upper bound for $A$. Conversely, if $A\notin\Vone$, then $A$ has an upper bound $c<1$, and the nonempty clopen set $\widehat{\neg c}$ misses the union.

Following~\cite{BCPT}, write $\mathcal D_o(K)$ for the collection of nonempty families of open subsets of $K$ whose union is dense in $K$. Thus~\eqref{eq:stone_density} identifies the covers in $B$ with the clopen families in $\mathcal D_o(K)$, and the same correspondence carries selections and plays of the games to one another. To pass to arbitrary open families, for each $\mathcal U\in\mathcal D_o(K)$ put
\[
 \mathcal R(\mathcal U)=\{C\in\operatorname{Clop}(K):C\subseteq U
 \text{ for some }U\in\mathcal U\}.
\]

This family has the same union as $\mathcal U$. For each $C\in\mathcal R(\mathcal U)$, choose $r_{\mathcal U}(C)\in\mathcal U$ containing $C$. Replacing selected clopen sets by these containing members preserves density of their union.

Scheepers proved the following characterizations for arbitrary topological spaces~\cite{ScheepersV}. For Stone spaces, they follow from the Boolean algebra case of Corollaries~\ref{cor:menger_classes} and~\ref{cor:distributive}.

\begin{corollary}\label{cor:stone_games}
Let $K$ be a Stone space and $\#\in\{1,\fin\}$. Then $\mathsf S_\#(\mathcal D_o(K),\mathcal D_o(K))$ holds if and only if Player~I has no winning strategy in $\mathsf G_\#(\mathcal D_o(K),\mathcal D_o(K))$.
\end{corollary}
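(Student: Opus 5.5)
The plan is to transfer both the selection principle and the game from $\mathcal D_o(K)$ to $\Vone$ in $B=\operatorname{Clop}(K)\cong B$, and then apply the Boolean algebra case of Corollary~\ref{cor:menger_classes} for $\#=\fin$ and of Corollary~\ref{cor:distributive} for $\#=1$. Through the Stone isomorphism, \eqref{eq:stone_density} identifies $\Vone(B)$ with the clopen members of $\mathcal D_o(K)$. So it suffices to prove that $\mathsf S_\#(\mathcal D_o(K),\mathcal D_o(K))$ is equivalent to $\mathsf S_\#(\Vone,\Vone)$ in $B$, and that Player~I has a winning strategy in $\mathsf G_\#(\mathcal D_o(K),\mathcal D_o(K))$ if and only if he has one in $\mathsf G_\#(\Vone,\Vone)$. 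The implication ``no winning strategy $\Rightarrow$ selection principle'' holds in general, so only the forward direction needs the transfer. Still, the cleanest route is to prove both equivalences.

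For the selection principles, first assume $\mathsf S_\#$ for $\mathcal D_o(K)$. A sequence in $\Vone(B)$ is a sequence of clopen families in $\mathcal D_o(K)$. Selections from it are clopen, and their union is dense, so by \eqref{eq:stone_density} they give a member of $\Vone$. For the converse, assume $\mathsf S_\#(\Vone,\Vone)$ and take $\mathcal U_n\in\mathcal D_o(K)$. Since clopen sets form a base, $\mathcal R(\mathcal U_n)$ has the same union as $\mathcal U_n$, so it is a clopen family with dense union and hence a member of $\Vone$. Select from each $\mathcal R(\mathcal U_n)$, then replace each selected $C$ by $r_{\mathcal U_n}(C)\in\mathcal U_n$. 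The new selections are at least as large, so their union is still dense. Finite selections stay finite and single selections stay single.

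For the games, suppose Player~I has a winning strategy $\sigma$ in $\mathsf G_\#(\mathcal D_o(K),\mathcal D_o(K))$. I would build a strategy $\tau$ in $B$ as follows. $\tau$ offers $\mathcal R(\sigma(\text{history}))$, and whenever Player~II answers with $C$ (or a finite $F$), $\tau$ feeds $r_{\mathcal U}(C)$ (or $r_{\mathcal U}[F]$) to $\sigma$ as the simulated response, where $\mathcal U$ is the current move of $\sigma$. Each simulated play is a legal play against $\sigma$, so its selected union is not dense. The clopen responses lie inside the simulated responses, so their union is not dense either. Thus $\tau$ wins. In the other direction, a winning strategy on $B$ is already a strategy in $\mathcal D_o(K)$, because its moves are clopen families in $\mathcal D_o(K)$ and responses are members of those moves. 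So the two games are equivalent for Player~I.

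With these transfers done, the Boolean case of Corollaries~\ref{cor:menger_classes} and~\ref{cor:distributive} gives $\mathsf S_\#(\Vone,\Vone)\Leftrightarrow\Inwin\mathsf G_\#(\Vone,\Vone)$ in $B$, and transferring back yields the claim for $\mathcal D_o(K)$. The only delicate step is the game simulation. There, $\tau$ must depend on the history of clopen responses, and the choice functions $r_{\mathcal U}$ must be fixed in advance so that the simulated history is well defined. Fixing $r$ once and for all, as in the preceding paragraph of the paper, handles this. Empty finite responses are handled trivially, since $r_{\mathcal U}[\emptyset]=\emptyset$.
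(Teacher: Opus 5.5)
Your proposal is correct and follows the paper's route. You use \eqref{eq:stone_density} to identify covers in $B=\operatorname{Clop}(K)$ with the clopen members of $\mathcal D_o(K)$, transfer selections and Player~I's strategies through the clopen refinement $\mathcal R(\mathcal U)$ with the fixed choice $r_{\mathcal U}$, and then invoke the Boolean algebra cases of Corollaries~\ref{cor:menger_classes} and~\ref{cor:distributive}. You also spell out the strategy simulation that the paper only sketches, and that simulation is sound.
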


The formulation of $\Ufin$ in Section~\ref{sec:ufin} also has a direct interpretation here. For finite selections $(F_n)_{n\in\omega}$ in $B$, requiring $\bigcup_{n\in J}F_n\in\Vone$ for every infinite subset $J$ of $\omega$ means that every nonempty open subset of $K$ meets $\bigcup_{b\in F_n}\widehat b$ for all but finitely many $n$. Indeed, infinitely many failures give an infinite set of innings whose selected union is not dense, and the converse follows by applying~\eqref{eq:stone_density} to each infinite set of innings.
The same refinement by clopen sets preserves this eventual intersection condition, so the game characterization in Theorem~\ref{thm:ufin} has a corresponding formulation for dense open families on Stone spaces. In particular, $\Sone(\mathcal D_o(K),\mathcal D_o(K))$ is the property called \emph{selectively ccc}, see~\cite{Aurichi} and~\cite[Definition~2.3]{ASZ}. Thus Stone duality also allows one to study selective ccc through $\Sone(\Vone,\Vone)$ on Boolean algebras, and the same correspondence applies to the other dense-family selection principles considered in~\cite{BCPT}. We shall not pursue these questions here.

The results in Sections~\ref{sec:finite} and~\ref{sec:one} give sufficient conditions for the two game characterizations. So, let $\mathcal H$ be the class of infinite lattices with greatest element $1$ for which $\Sfin(\Vone,\Vone)$ implies $\Inwin\Gfin(\Vone,\Vone)$, and let $\mathcal P$ be the corresponding class for $\Sone$ and $\Gone$. Since the reverse implications hold in general, membership in either class means that the corresponding selection principle and the absence of a winning strategy for Player~I hold simultaneously or fail simultaneously.

\begin{question}\label{que:game_classes}
Characterize the classes $\mathcal H$ and $\mathcal P$.
\end{question}

Theorems~\ref{thm:menger} and~\ref{thm:rothberger} give sufficient distributive conditions for membership in these classes. By Theorem~\ref{thm:lower}, every infinite lattice of cardinality less than $\mathfrak d$ belongs to $\mathcal H$, and every infinite lattice of cardinality less than $\covM$ belongs to $\mathcal P$. For lattices of cardinality at least these respective bounds, can $\SC_1$ and $\CD_{\omega,1}$ be replaced by strictly weaker sufficient conditions for $\mathcal H$ and $\mathcal P$? Example~\ref{ex:binary_pawlikowski} shows that $\SC_1$ alone does not suffice for $\mathcal P$.

\begin{question}
Are the lower bounds $\covM$ and $\mathfrak d$ attained in ZFC for the corresponding minima? Can the minima for posets and complete bounded distributive lattices be different? Do these minima coincide with classical cardinal characteristics of the continuum?
\end{question}

The complete lattices in Section~\ref{sec:realizations} preserve the selection principles and the winning strategies of the corresponding ideals without increasing the size of the almost disjoint family. It remains to determine whether other ideal constructions admit complete lattice representations with a similar control of cardinality.

\end{document}